\newtheorem{definition}{Definition}
\newtheorem{construction}{Construction}
\newtheorem{proposition}{Proposition}
\newcommand{\antip}[1]{#1^\star}
\newcommand{\spf}{{\bf F}}
\newcommand{\spb}{{\bf B}}
\newcommand{\spl}{{\bf L}}
\newcommand{\spr}{{\bf R}}
\newcommand{\spu}{{\bf U}}
\newcommand{\spd}{{\bf D}}
\newcommand{\ppf}{F}
\newcommand{\ppb}{B}
\newcommand{\ppl}{L}
\newcommand{\ppr}{R}
\newcommand{\ppu}{U}
\newcommand{\ppd}{D}
\newcommand{\spp}[1]{{\bf #1}}
\newcommand{\orthoback}[1]{{{#1}^b}}
\begin{document}

\title[A Construction of the Total Spherical Perspective]{A Construction of \break the Total Spherical Perspective \break in Ruler, Compass and Nail}

\author{Ant\'onio Ara\'ujo}
\address{  DCeT, Universidade Aberta, R. Escola Polit\'{e}cnica, 141-147, 1269-001 Lisboa, \newline Portugal.
         \newline
  CMAF-CIO,  Faculdade de Ci\^encias da Universidade de Lisboa, 1749-016 Lisboa, \newline
Portugal.
          \newline
  CIAC, Campus Gambelas, FCHS, Faro, Portugal.}
\email{antarj@gmail.com}

\begin{abstract}
We obtain a construction of the total spherical perspective with ruler, compass, and nail.
This is a generalization of the spherical perspective of Barre and Flocon to a 360 degree field of view. Since the 1960s, several generalizations of this perspective have been proposed, but they were either works of a computational nature, inadequate for drawing with simple instruments, or lacked a general method for solving all vanishing points.
We establish a general setup for anamorphosis and central perspective, define the total spherical perspective within this framework, study its topology, and show how to solve it with simple instruments. We consider its uses both in freehand drawing and in computer visualization, and its relation with the problem of reflection on a sphere.
\end{abstract}

\maketitle

\section{Introduction - Previous works on Wide Angle Perspectives}

It is often useful in the visual arts to depict a scene composed within a very wide angle of view. In their 1968 work \cite{BarreFlocon}, Barre and Flocon described a spherical perspective and provided a method to solve it with ruler and compass. Spherical perspective is however a misnomer of sorts, as only a hemisphere is projected, thus obtaining a 180 degree view around an axis. More recently, several works of a computational nature  have proposed various types of wide angle perspectives,  by generalizing the shapes of the projection surfaces or by expanding the angle of view up to 360 degrees (\cite{CORREIA}, \cite{UPTO360}). These works, however, require the use of computers, and are not amenable to the artistic practice of drawing freehand or with simple instruments such as ruler and compass.

The classic artistic device for representing views wider than 180 degrees is that of drawing sphere reflections from observation. In \cite{REFLECTIONS} sphere reflections were proposed for this reason as a wide angle perspective. Again this is a work of a computational nature, since sphere reflections are hard to solve. We will consider these difficulties ahead, and relate sphere reflections to spherical perspective.
 
Apart from computer scientists, several artists have tried their hand at generalizing spherical perspective:
 Dick Thermes is well-known for his paintings on spherical surfaces. These provide a full 360 degree view, but this is anamorphosis rather than perspective. His approach \cite{TERMES} to perspective  is based on gridding that follows from \cite{BarreFlocon}.

In a 1983 paper \cite{casas} F. Casas attempted the construction of a flat-sphere perspective, but with a misunderstanding of the geometry near the \em blowup\em, mistaking the lack of an isometry for that of a mathematically well-defined flattening. Not only does a flattening exist, there is an infinite number of them (two are described in the present work), and one must be chosen to specify a perspective. This Casas does not do, hence he is limited to a discussion of qualitative properties that apply to a class of total spherical perspectives. In \cite{moose} M. Moose implements the program of Casas through an ad-hoc gridding scheme that specifies an actual perspective, though not the one that generalizes \cite{BarreFlocon}.

There seems to be in these latter works somewhat of a misconception of the problem.  Barre and Flocon's contribution was not to create a flattening of the hemisphere. They chose one well known to cartographers, as acknowledged in \cite{Bouligand}. Their contribution was to fit it into the framework of a perspective, to provide a classification of all vanishing points and a way to solve them by elementary means. That is also what is provided in the present work for the total spherical perspective that generalizes Barre and Flocon's.

The reason why Barre and Flocon stopped at 180 degrees is a matter for the historian. This matter is discussed elsewhere  \cite{convocarte} at length, but briefly: it is not likely that they ignored that their flattening could be extended to 360 degrees, at least after their work with Bouligand \cite{Bouligand} in which the cartographic options were surveyed. My impression is that they stopped at 180 degrees for two reasons: their stated purpose of keeping linear deformations within reasonable bounds, and the difficulty of plotting line projections beyond the equator, where they stop being well approximated by arcs of circle. We shall see ahead how lines can be projected in a simple way beyond the 180 degree mark.

Previous to these publications, during the 1970s, the Belgian architect G\'{e}rard Michel made several experimental works in spherical perspective, drawing intuitively from urban scenes, but these drawings, along with brief hints on the artist's process, where published only recently \cite{Michel}. This rather discreet publication is the most important direct precursor of the present paper\footnote{It was not an influence on this work, however, as I only became aware of G\'{e}rard's work through private communication with the author - a fellow urban sketcher - while circulating the first draft of the current paper.} as Michel's drawings implicitly anticipate several of the formal results developed here. It is however an informal work with a different scope. As far as I know the present paper is the first systematic presentation of the total spherical perspective that clearly formalizes and solves it, that is, provides a classification of all lines and vanishing points, and a systematic method to find and project them, in a way amenable to drawing with simple instruments, from either orthographic plans or direct observation from nature. The coupling of the analytic and geometric formulations here presented also sugests an efficient method for computer rendering, but that is beyond the scope of the present paper and will be presented elsewhere.

\section{Perspectives}

Perspectives are representations of spatial scenes on a plane, with relation to an observer. Because visual occlusion is radial, most of the perspectives used by artists (classical, cylindrical, spherical) are central perspectives.
In what follows we shall define a central perspective as a composition of two maps: an anamorphosis followed by a flattening.

In dictionaries and perspective manuals the term anamorphosis describes an \em inverse \em problem that relates to its etymology ("to form again"): the game of finding the correct point to observe a picture. But it is more enlightening from a didactic and conceptual viewpoint (and also most in accordance with its role in the history of perspective) to define anamorphosis as a \em direct \em geometric construction that sets the foundation for  building a perspective. Vanishing points will be defined at the level of anamorphosis, even before one settles on a specific perspective.

\subsection{Anamorphosis, Topology and Vanishing points}

We shall speak of an observer to mean a point $O$ in three-dimensional euclidean space. We shall speak of a \em scene \em to mean a closed set in that space. 

A fundamental fact about vision is that, with few and notable exceptions, occlusion is radial. That is, points along the same ray from the viewer are seen as equivalent. Consequently, the draughtsman, like the astronomer, deals with rays rather than points and solid angles rather than lengths. This allows for a piece of trompe l'oeil to be created by the process of conic anamorphosis\footnote{We shall from now on omit the term \em conic \em and speak simply of anamorphosis, as we shall use no other type.}:
 a two-dimensional picture on a surface $S$ that creates, for an observer at $O$, the visual illusion\footnote{That the problem of anamorphosis (creating a two-dimensional simulacrum of a spatial scene) is solved by the construction of the same name is not a demonstrable mathematical property but an empirical fact of optics and physiology. Conic anamorphosis will fail to solve the problem when, for instance, linear optics approximation is not valid.} of a spatial scene $\Sigma$.

Let $\mathcal{R}_O$ be the set of rays from $O$. Let $S^2_O$ be the unit sphere centered at $O$. The isomorphism $P \mapsto \overrightarrow{OP}$ endows $\mathcal{R}_O$ with the topology of the sphere. Hence we can speak of the topological closure of a set of rays from $O$. Let $cl(X)$ denote the closure of a set $X$.


Let $\Sigma$ be a scene. $\Sigma$ defines a cone of rays from $O$, $C_O(\Sigma)=\{\overrightarrow{OP}: P \in \Sigma \}$. We say that $C_O(\Sigma)$ is the \em cone of sight \em of $\Sigma$ from $O$.



We say that a surface $S$ is \em central \em relative to a point $O$ if any ray from $O$ intersects  $S$ at most once.
We say that $S$ is an anamorphic surface relative to $O$ if it is a compact central surface relative to $O$. 

\begin{definition}
Let $S$ be an anamorphic surface for $O$ and $\Sigma$ a scene. We say that $C_{O,S}(\Sigma)=cl(C_O(\Sigma)\cap S)$ is the anamorphosis of $\Sigma$ on $S$ relative to $O$. Let $\Lambda:\mathbb{R}^3\setminus\{O\}\rightarrow S$ be the map $P \mapsto \overrightarrow{OP}\cap S$. We call $\Lambda$ the anamorphism (or conic projection) onto $S$ relative to $O$. We use the same name for the corresponding map $\Lambda: \mathcal{R}_O \rightarrow S$.
\end{definition}

From the point of view of the topologist, the purpose of perspective is the compactification of a spatial scene.
A line in space is closed but not bounded. Its anamorphosis onto a compact surface will be bounded but generally not closed. To make it closed, hence compact, we must add to it its \em vanishing points\em. We will define the vanishing points of a scene in an intrinsic way that does not depend on the specific perspective under consideration but only on the point $O$.\footnote{We assumed a scene to be closed in order not to get \em false \em vanishing points from this definition. We could drop the restriction by setting $\mathcal{V}_O(\Sigma)=cl(C_O(cl(\Sigma)))\setminus C_O(cl(\Sigma))$  instead, but that would be uglier and gain us little.}

\begin{definition}
 We say that $\mathcal{V}_O(\Sigma)=cl(C_O(\Sigma))\setminus C_O(\Sigma)$ is the set of \em vanishing points \em of the scene $\Sigma$ relative to $O$.
\end{definition}

We say that $\mathcal{V}_O(\Sigma)\cap S$ is the set of vanishing points of $\Sigma$ in the anamorphosis $C_{O,S}(\Sigma)$. 
Hence, the anamorphosis of $\Sigma$ onto $S$ is the union of $\Lambda(\Sigma)$, the strict conic projection onto $S$,  with its vanishing points.
The following is easy to show:
\begin{proposition}\label{vanishing_points_prop}
Let $r$ be a line and $r_0$ its translation to $O$. Then the set of vanishing points of $r$ in $S$ is $r_0 \cap S$. Analogously, let $H$ be a plane and $H_0$ its translation to $O$. Then the vanishing set of $H$ in $S$ (called its vanishing line) is $H_0 \cap S$. Hence the anamorphosis of a line $AB$ onto $S$ is a subset of the vanishing line of the plane $AOB$.    
\end{proposition}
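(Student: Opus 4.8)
The plan is to reduce both equalities to a single observation: for a closed scene $\Sigma$ with $O\notin\Sigma$, a ray $u\in\mathcal R_O$ lies in $\mathcal V_O(\Sigma)$ precisely when $u$ is a limit of rays $\overrightarrow{OP_n}$ with $P_n\in\Sigma$ while the ray $u$ itself contains no point of $\Sigma$. Indeed, $u\in C_O(\Sigma)$ means exactly that the ray $u$ meets $\Sigma$, and since $\mathcal R_O\cong S^2_O$ is metrizable, $u\in cl(C_O(\Sigma))$ means exactly that $\overrightarrow{OP_n}\to u$ for some $P_n\in\Sigma$. Placing the origin at $O$, one has $\overrightarrow{OP_n}=P_n/|P_n|$; if $(|P_n|)$ is bounded we pass to a subsequence with $P_n\to P$, and then $P\in\Sigma$ (as $\Sigma$ is closed), $P\ne O$ (as $O\notin\Sigma$), and $u=\overrightarrow{OP}\in C_O(\Sigma)$. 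Hence a vanishing point can only arise from the unbounded case $|P_n|\to\infty$ (along a subsequence), in which the limit of $P_n/|P_n|$ is constrained to the asymptotic directions of $\Sigma$.

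For a line $r$ with $O\notin r$, write $P_n=A+t_nv$ with $v$ the direction of $r$; the unbounded case forces $|t_n|\to\infty$, and then $P_n/|P_n|\to v/|v|$ or $-v/|v|$ according as $t_n\to+\infty$ or $t_n\to-\infty$. Thus the only candidate vanishing points are the two opposite rays making up $r_0$, the line through $O$ parallel to $r$; and each of these is indeed attained as such a limit and, being parallel to $r$ and issuing from $O\notin r$, meets no point of $r$, so it genuinely belongs to $\mathcal V_O(r)$. Therefore $\mathcal V_O(r)$ is exactly the pair of rays composing $r_0$, and passing from these rays to the points where they meet $S$ (here one uses $O\notin S$, so the vertex $O$ of $r_0$ contributes nothing) gives $\mathcal V_O(r)\cap S=r_0\cap S$. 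For a plane $H$ with $O\notin H$ the argument is the mirror image: writing $H=\{Q:\langle n,Q\rangle=c\}$ with unit normal $n$ and $c\ne 0$, the unbounded case gives $\langle n,P_n/|P_n|\rangle=c/|P_n|\to 0$, so every vanishing point is a unit vector orthogonal to $n$, i.e.\ a direction parallel to $H$; conversely, for any such $u$ and any $Q_0\in H$ the points $Q_0+tu$ lie in $H$ and satisfy $(Q_0+tu)/|Q_0+tu|\to u$, while the ray $u$ from $O$ misses $H$. Hence $\mathcal V_O(H)$ is the great circle of rays composing $H_0$, the plane through $O$ parallel to $H$, and so $\mathcal V_O(H)\cap S=H_0\cap S$, the vanishing line of $H$.

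The last assertion then needs no new analysis. The line $AB$ lies in the plane $AOB$, so every ray $\overrightarrow{OP}$ with $P\in AB$ lies in $AOB$, whence $C_O(AB)\cap S\subseteq AOB\cap S$; and $AOB\cap S$, being the intersection of a plane with the compact surface $S$, is closed, so $C_{O,S}(AB)=cl\,(C_O(AB)\cap S)\subseteq AOB\cap S$. This last set is precisely the vanishing line of the plane $AOB$: since $AOB$ passes through $O$, its translate to $O$ is itself, so its vanishing line, in the sense above, is $(AOB)_0\cap S=AOB\cap S$.

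I expect the only point requiring care to be the closure computation — the clean separation of the approximating sequence into a bounded case (contributing nothing new) and an unbounded case (which pins the vanishing set down to the asymptotic directions) — together with the bookkeeping between a ray viewed as a point of $\mathcal R_O$ and the same ray viewed as a subset of $\mathbb R^3$ meeting $S$ in at most one point. The degenerate configurations — a line through $O$, whose cone of sight is already closed so that its vanishing set is empty, and likewise a plane through $O$ — fall outside the generic statement and should simply be excluded at the start; the sole exception is the plane $AOB$ in the final clause, which is handled directly as above rather than through the formula for $\mathcal V_O$.
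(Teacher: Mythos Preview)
Your proof is correct. The paper itself does not supply a proof of this proposition, stating only that it is ``easy to show''; the subsequent subsection on anamorphosis onto a sphere does, however, sketch an informal geometric argument for the special case $S=S^2_O$, observing directly that $C_O(l)$ is the open half-plane bounded by $l_0$ inside the plane through $O$ and $l$, so that its closure adds precisely $l_0$ (and analogously for planes).

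Your approach differs from this geometric picture: you work with sequences and the metrizability of $\mathcal R_O\cong S^2_O$, splitting into a bounded case (which contributes nothing to the vanishing set because $\Sigma$ is closed and $O\notin\Sigma$) and an unbounded case (which isolates the asymptotic directions). This is slightly more machinery than the paper's direct identification of $C_O(l)$ as a half-plane, but it has the virtue of making explicit exactly where closedness of $\Sigma$ and the hypothesis $O\notin\Sigma$ enter, and of treating the line and plane cases by a uniform mechanism that would extend to other closed scenes. Your handling of the final clause is also more careful than the paper's: you correctly note that the plane $AOB$ passes through $O$, so the formula $\mathcal V_O(H)\cap S=H_0\cap S$ does not literally apply (indeed $\mathcal V_O(AOB)=\emptyset$), and you instead read ``vanishing line of $AOB$'' as $(AOB)_0\cap S=AOB\cap S$ and verify the inclusion $C_{O,S}(AB)\subseteq AOB\cap S$ directly from closedness of the latter.
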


\subsection{Anamorphosis onto a sphere}

Anamorphosis onto a sphere is the simplest anamorphosis, due to the natural isomorphism between the rays of sight and points of the sphere and is therefore very symmetric: all lines will project equally up to rotation and all have exactly two vanishing points.
All the other common anamorphoses in artistic practice (plane, cylinder, hemisphere) result in a more complicated description of vanishing points and line projections.

\noindent Let's recall a few generalities about circles on spheres:

A \em great circle \em is a circle on a sphere, defined by the intersection of the sphere with a plane through the origin. 

Given a point P on the sphere we call \em antipode point \em of P to the its diametrically opposite point on the sphere, and we denote it by $\antip{P}$. 

Two non-antipodal points $P$ and $Q$ on the sphere define a unique great circle, the intersection of the sphere with the plane $POQ$. We call this the $PQ$ great circle.

Each point $P$ on the sphere defines a family of great circles, all crossing both $P$ and its antipode $\antip{P}$, that covers the sphere. We call these circles $P$-great circles or $P\antip{P}$-great circles and call $P$ and $\antip{P}$ the poles of the family. A \em meridian \em is one connected half of a great circle. We call $P$-meridian or $P\antip{P}$-meridian to a meridian whose endpoints are at $P$ and $\antip{P}$.


We can now construct the anamorphosis of a generic spatial line:

\noindent Let $l$ be a line, $O\not \in l$. There is a single plane $H$ through $O$ containing $l$.  This plane defines a great circle $C$ on the sphere. The cone of sight of $l$ is $C_O(l)=\{\overrightarrow{OP} :P \in l\}$, a half-plane contained in $H$ whose boundary is the line $l_O$, the translation of $l$ to the origin. 
$l_0$ is the union of two rays from $O$ none of which is a ray of sight of an actual point of $l$ but correspond to the limit of the directions of sight of an observer that follows $l$ in both directions. Hence the strict conic projection of $C_O(l)$ onto the sphere is a meridian $M \subset C$ with its two antipodal endpoints missing. These two points are the intersection of $l_O$ with the sphere, and are the vanishing points of the line. Taking the topological closure of M we get the anamorphosis of $l$ onto $S$, which is a full meridian, being the union of M with the vanishing points. 

In the degenerate case $O\in l$, $l$ projects onto two antipodal points, with no vanishing points.

Analogously, we obtain the anamorphic image of a generic plane:

Let $H$ be a plane, $O \not \in H$. The cone of sight  $C_O(H)$ is a half-space whose boundary is $H_0$, the plane through $O$ parallel to $H$. The boundary is not contained in the set of rays of sight of individual points of $H$. The strict conic projection onto the sphere will be a hemisphere missing its boundary great circle C. Taking the closure of the conic projection we get the anamorphosis of $H$, a full hemisphere containing the \em vanishing circle \em $C$.  

In the case $O \in H$, $H$ projects onto a great circle with no vanishing points.

\subsection{From Anamorphosis to Perspective}

Just as the globe provides the cartographer with an ideal isometric model of the Earth, so does conic anamorphosis provide the artist with a topologically compact two-dimensional optical simulacrum of a spatial scene. But often, both cartographer and artist are willing to abandon these ideal models for the convenience of working on a flat surface. In the process, the cartographer pays with the loss of isometry, and the artist with the loss of the optical properties of anamorphosis.

Going from anamorphosis to perspective - as in going from globe to chart - can be done in an infinite variety of ways. Intuitively, we would like to say a perspective is an anamorphosis onto a surface $S$ followed by a flattening of $S$ onto a plane. However, if we try to do that naively we find that usually (e.g. in cylindrical or spherical perspective) the flattening map $\pi$ will only be well defined on a dense open set of $S$. We can however ensure that the inverse of $\pi$ extends to a continuous map between compact sets. This we must do to preserve the essential role of compactification, that is, of vanishing points.

\begin{definition}\label{perspective_definition}
Let $\Lambda: \mathcal{R}_O \rightarrow S$ be an anamorphism. We say that $\pi: U\rightarrow \mathbb{R}^2$ is a \em flattening \em of $S$ if $U$ is an open dense subset of $S$, $\pi$ is an homeomorphism, and there is a continuous map  $\tilde{\pi}:cl(\pi(U))\rightarrow S$ such that $\tilde{\pi}|_{\pi(U)}=\pi^{-1}$. We say that $p=\pi \circ \Lambda$ is the \em perspective \em associated to the flattening $\pi$. Let $\tilde{p}=\Lambda^{-1}\circ\tilde{\pi}$.
Given a scene $\Sigma$, we say that $\tilde{p}^{-1}(\Sigma)$ is the strict perspective image of $\Sigma$, that $\tilde{p}^{-1}(\mathcal{V}_O(\Sigma))$ is the vanishing set of $\Sigma$, and that the perspective image of $\Sigma$ is the union of its strict perspective image with its vanishing set. 
\end{definition} 

We find that the fundamental maps are not so much $p$ and $\pi$ but $\tilde{\pi}$ and $\tilde{p}$. That is, functional arrows are exactly the reverse of the naive view when we consider the topology. We resist the temptation to do away with tradition altogether and will still call $p$ the perspective. 

Apart from these formalities, a perspective should follow two informal but crucial requirements: First, it should be evocative of the visual experience, i.e., preserve at least some aspects of the spatial illusion that the anamorphosis affords. 

Second, it must be \em solvable \em. By solving a perspective we mean finding and plotting the images of the basic idealized objects of perspective - points, lines and planes - out of which more complex scenes are approximated. It follows from proposition \ref{vanishing_points_prop} that the image of a line $AB$ is a subset of the vanishing set of plane $AOB$. Hence solving a perspective reduces to solving its vanishing points. Whether a perspective is solvable depends on what tools we allow to solve it. A perspective may be solvable by a computer but inadequate to the unaided human artist. In this work we are concerned with a perspective that can be solved with elementary means, such as ruler and compass.

Among the infinite flattenings available for each surface $S$, a dense set will preserve nothing of visual interest, or will be too hard to solve. Considering the classical examples of perspective we see that the flattenings are chosen in order to relate naturally to their anamorphic surface, and to satisfy our two requirements:
In classical perspective the anamorphic surface is already a plane, so the natural flattening is the identity map (modulo scaling). Straight lines are preserved. In cylindrical perspective the anamorphic surface is a cylinder, which is a developable surface, so it can be cut and unfolded isometrically. Spatial lines become ellipses by anamorphosis and sinusoidals upon flattening. These can be plotted in good approximation by ruler and compass. In (hemi)spherical perspective the anamorphosis turns lines into arcs of great circle. There is no isometric flattening of a sphere onto a plane (we are in the position of the cartographer) so Barre and Flocon chose a flattening that preserves lengths along a set of meridians and that, crucially, turns arcs of great circle into arcs of circle in good approximation, which allows for plotting using ruler and compass.

There is an interesting symmetry between spherical and plane perspective. In classical perspective the flattening is trivial but the anamorphosis is not. In spherical perspective the opposite is true. 
This is because in classical perspective the plane of the anamorphosis can be identified with the plane of the perspective, while in the spherical perspective the anamorphic sphere can be identified with the set of directions, so the flattening in the former case and the anamorphosis in the latter can be identified with the identity map. This gives classical perspective its special status: since the flattening is trivial, anamorphosis is preserved. So called "perspective deformation" is a misnomer, resulting from the failure of the observer to stand at point $O$. The distortion of linear measurements (the so-called "paradox" of Leonardo) is a necessary consequence of the preservation of solid angles from $O$, and a feature, not a bug, of an effective \em trompe l'oeil \em\cite{convocarte}.

\section{Total spherical perspective: Flattening a sphere.}

We will now define our total spherical perspective, within the general scheme outlined above. We need an anamorphosis followed by a flattening. The anamorphosis is fully defined by the choice of the surface and the place of the observer. We take for anamorphic surface the unit sphere $S^2$, with the observer $\spp{O}$ at its center. We have already described the properties of this anamorphosis with regards to the projection of lines and planes. We must now discuss the flattening map.


We start by defining an observer-centered reference frame. We consider a ray stemming from $\spp{O}$, representing a privileged direction of sight. We call it the \em central ray of sight \em and to its axis we call the \em central axis of sight \em. We place an orthonormal right-handed coordinate system $xyz$ in $\spp{O}$, such that the positive side of the $y$ axis coincides with the central ray of sight. For easy reference we name the points where the three axes cut the sphere: we call \textbf{F}ront to the intersection of the central ray of sight with the sphere and \textbf{B}ack to its antipode point; \textbf{R}ight to the point where the $x$ axis touches the sphere and \textbf{L}eft to it's antipode; \textbf{U}p to the point where the positive $z$ axis touches the sphere and \textbf{D}own to its antipode, and we represent these points by their initials written  in bold. 

From now on we will simplify notations with the following convention: a spatial point and its plane projection will be denoted by the same letter, the spatial point in bold font and the projection in italic font. Hence, $P=p(\spp{P})$ will be the perspective of a spatial pont $\spp{P}$. In  particular, $P=\pi(\spp{P})$ will be the flattening of a point on the sphere, so the perspective images of reference points $\spf,\spb,\spl,\spr,\spu,\spd$ will be $F,B,L,R,U,D$ respectively.

We call the $y=0$ plane (orthogonal to the central axis of sight) the \em observer's plane\em. The observer's plane intersects the sphere in a great circle we call the \em equator\em.  We call the $x=0$ plane the \em sagittal plane\em, and to $z=0$ we call the \em plane of the horizon\em. We call \em central \em meridians to the $\spf$-meridians.
We call the half-space $y>0$ the \em anterior \em half-space (representing everything in front of the observer) and to the half-space $y<0$ we call the \em posterior \em half-space (representing all that is behind the observer).

We will now construct a flattening of the sphere. This is a construction for the \em azimuthal equidistant projection\em, well known to cartographers and astronomers. A restriction of this map to a single hemisphere is used in \cite{BarreFlocon}. Our purpose here is to establish a derivation of this map that is adequate to our purposes and show that it fits within our definition of perspective. 

Intuitively, we picture it thus: we look at the sphere as the union of its central meridians,  which we think of as inextensible threads. We cut the threads free at $\spb$, and pull them straight along their tangents at $\spf$, flattening them onto the plane tangent to the sphere at $\spf$ (see fig. \ref{disc360}). The straightened threads radiate from $\spf$, forming a disc $D$ of radius $\pi$. We call the boundary circle  of the disc the \em blowup \em of $\spb$, as we see this point as having been blown-up into the set of rays of the tangent plane of the sphere at $\spb$, each ray corresponding to one of the meridians from which $\spb$ could be approached . We now formalize this construction:

Let $D=\{(x,z)\in \mathbb{R}^2:x^2+z^2<\pi \}$. Let $\pi: S^2\setminus\{\spb\}\rightarrow D$ be the homeomorphism such that 

$\mathcal{C}_0$) each central meridian maps onto a line segment.

$\mathcal{C}_1$) distances are preserved along each central meridian.

$\mathcal{C}_2$) angles between central meridians are preserved at $\spf$.

Extending $\pi^{-1}$ to the closure of its domain we obtain the continuous map between compacts, $\tilde{\pi}: cl(D)\rightarrow S^2$. By continuity, it verifies $\tilde{\pi}(P)=\spb$ for all $P$ on the blowup circle $cl(D)\setminus{D}$, and $p=\pi \circ \Lambda$ defines a perspective according to definition \ref{perspective_definition}. 

Condition $\mathcal{C}_1$ means that the map is an isometry for each $\spf$-meridian separately. Since distances measured along great circles of the sphere are proportional to angles from the center, this means that if $\spp{P},\spp{Q}$ are points on the same $\spf$-meridian and if $P,Q$ are their images, then $|PQ|=\angle \spp{P}\spp{O}\spp{Q}$ up to multiplication by a scale factor\footnote{For points on the images of these meridians we will freely abuse notation and write equalities between angles and linear measures such as $|\overline{XZ}|=|\overline{XY}|+180^\circ$ to mean that these equalities are valid modulo product by the adequate scale factors.}. Conditions $\mathcal{C}_0$ and $\mathcal{C}_1$ imply that $\spf$ will be mapped to the center of the disc with images of the $\spf$-meridians radiating from it as line segments.

Condition $\mathcal{C}_2$ means that the angles between these segments at $\ppf$ will be equal to the angles of the corresponding meridians at $\spf$. This ensures the central meridian images will be distributed radially preserving their tangents at $\spf$, that is, they will look as if orthogonally dropped onto the tangent plane of the sphere at $\spf$. 
We call \em longitude \em of an $\spf$-meridian to the angle at $\spf$ between its tangent and that of the $\spf$-meridian through $\spr$. By $\mathcal{C}_2$, The longitude of a meridian equals the angle between its image and the $\ppf\ppr$ measuring line.

$\mathcal{C}_1$ and $\mathcal{C}_2$ together imply that the images of the two meridians of each great circle through $\spf$ form a diameter of the perspective disc and that distances are preserved within each diameter. For this reason we call \em measuring lines \em to the diameters of the perspective disc.


We will call equator of the perspective disc to the perspective image of the sphere's equator. This is a circle, with half the radius of the disc, upon which lie the images of points $\spr$,$\spl$,$\spu$,$\spd$. It divides the perspective disc into two parts: an inner disc that is the flattening of the anterior hemisphere, and an outer ring, between the equator and the blowup, that is the flattening of the posterior hemisphere (See fig. \ref{disc360}).


\begin{figure}
\hspace*{-1.2cm} 
\includegraphics[height=5.5cm]{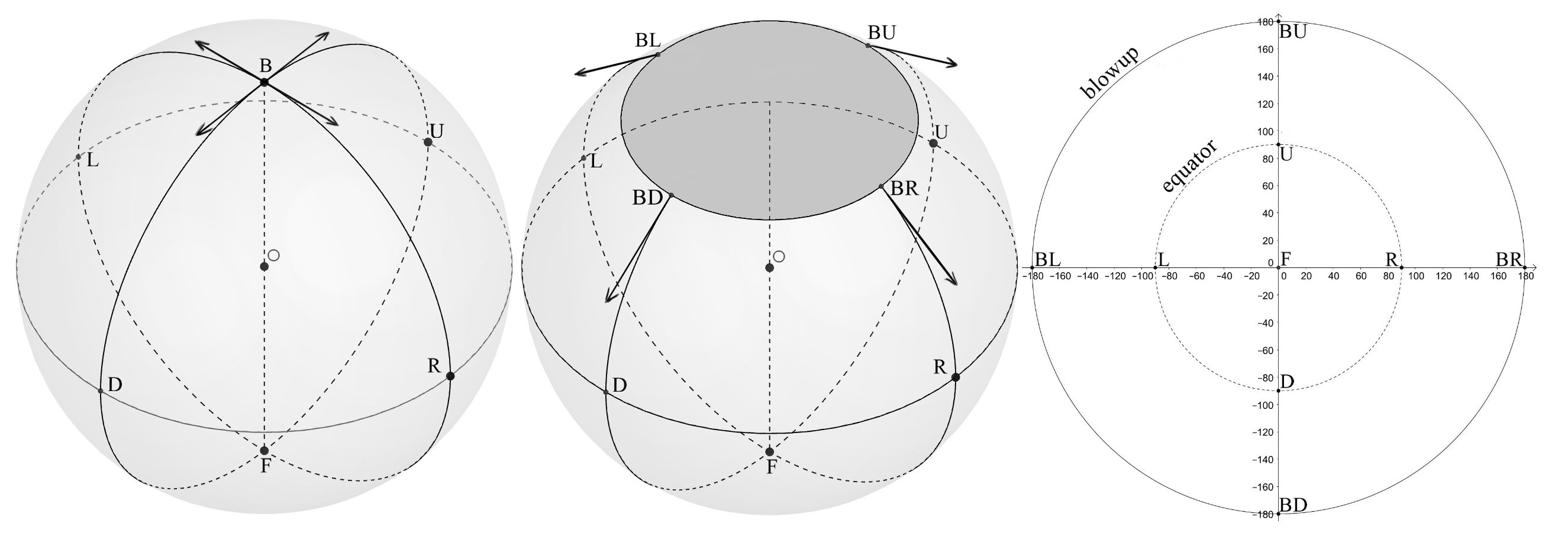}
\caption{Point $\spb$ is blown-up to a circle (\spb\spd-\spb\spr-\spb\spu-\spb\spl) and the punctured sphere is flattened onto the perspective disc. Distances (or angles from $O$) are preserved along points on each $\spf$-meridian. Here we see them marked along the $\spu$-$\spd$ and $\spl$-$\spr$  measuring lines, ranging from -180 to 180 degrees along each diameter of the perspective disc.}\label{disc360}
\end{figure}

In terms of the cartesian $(x,y,z)$ coordinates, the flattening composed with anamorphosis $\spp{P}\mapsto \overrightarrow{\spp{O}\spp{P}}/||\overrightarrow{\spp{O}\spp{P}}||$  gives the perspective map $f:\mathcal{R}_\spp{O}\setminus\{\overrightarrow{\spp{O}\spb}\}\mapsto D$,
\begin{equation}\label{exactxyz}
p(\left[x,y,z\right])=\frac{(x,z)}{\sqrt{x^2+z^2}}arccos\left(\dfrac{y}{\sqrt{x^2+y^2+z^2}}\right)
\end{equation}
this can be seen as projecting orthogonally against the $xz-$plane, taking the unit vector, and then scaling to a length equal to the value of the angle $\angle \spp{P}\spp{O}\spf$.

The natural set of spherical coordinates for this map is $(\rho, \lambda, \theta)$ with 
\begin{align}\label{natcoords}
\begin{split}
\rho&=|\spp{O}\spp{P}|=\sqrt{x^2+y^2+z^2}\\
\lambda&=\angle \spp{P}\spp{O}\spf=arccos\left(\dfrac{y}{|\spp{O}\spp{P}|}\right)\\
\theta&=arccos\left(\dfrac{x}{\sqrt{x^2+z^2}}\right)
\end{split}
\end{align}
where one can see $\lambda$ as the latitude, measured from $\spf$, and $\theta$ as the longitude, measured from $\spp{R}$. In these coordinates the anamorphosis becomes trivial, $p=\pi \circ \Lambda$ identifies with the flattening $\pi$ and we see clearly that the perspective image of $P$ doesn't depend on $\rho$, which was to be expected, since $\Lambda$ is a central projection:
\begin{equation}\label{trivial}
p(\rho, \lambda, \theta)=\pi(\rho, \theta)=\lambda(cos(\theta),sin(\theta))
\end{equation}

\begin{figure}
\centering
\includegraphics[height=10cm]{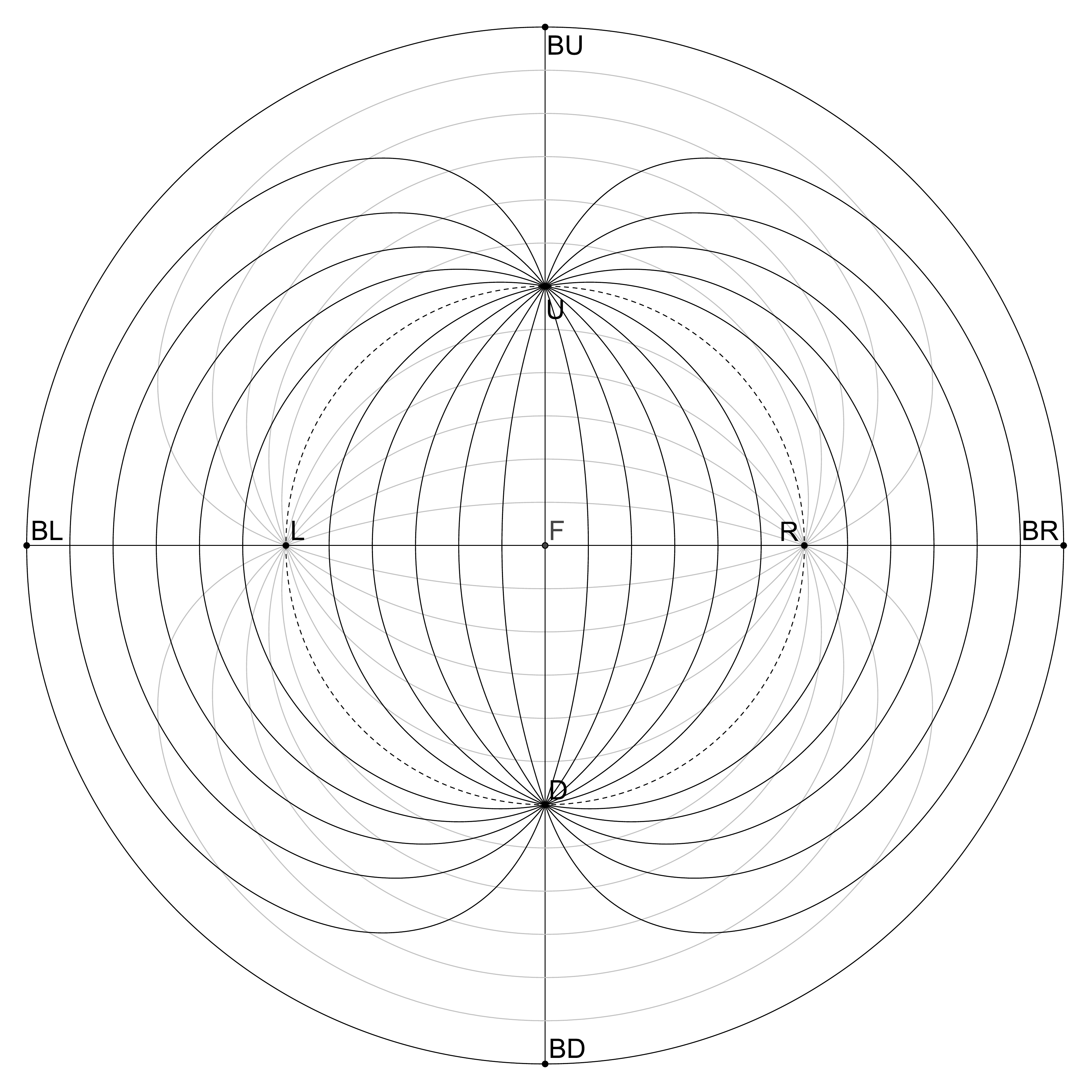}
\caption{Flattening of $\spu\spd$ (black) and $\spl\spr$ (grey) great circles from equation \ref{exactxyz}, at 15 degree intervals. These great circles correspond to the perspective images of vertical and frontal horizontal lines respectively.}\label{exact}
\end{figure}
\section{Solving a scene with ruler, compass, and nail}
The explicit form of the map obtained in the previous section would be enough for a pixel-by-pixel rendering of a scene on a computer. One could use it for a 3D engine or for displaying 360 degree photos captured with two 180 degree fisheye cameras pointing in opposite directions. It would however be of little use to the unaided human artist. 
A perspective that is useful for the draughtsman must stipulate how to solve a scene with  simple instruments. In what follows we will show how to solve a scene in total spherical perspective with ruler and compass, with allowance for marked rulers and plotting of arbitrary angles with protractors. The addition of a further tool - a nail - will further simplify practical constructions. We assume the data for the scene can be given either from direct measurements of angles (theodolite) or from Cartesian coordinates (architect's plan/orthographic views), and make stipulations for both cases.


A common technique to solve scenes in classical perspective is to make the plane of the perspective image do double or triple duty by superposing on it various orthogonal projections. This technique also works in spherical perspective. We will illustrate this in our first graphical construction:

\begin{construction}\label{on_equator}
\noindent Construction of the perspective image of a point on the observer's plane: Let $\spp{P}\neq \spp{O}$ be a point on the observer's plane. Then $\overrightarrow{\spp{O}\spp{P}}$ crosses the equator of the sphere. Hence the perspective image of $\spp{P}$ will be the point $P$  at the equator of the perspective disc such that $\angle P \ppf \ppr =\angle \spp{P}\spp{O}\spp{R}$. If the (x,y,z) coordinates of $\spp{P}$ are given, we can construct $P$ graphically thus: We make the plane of the drawing  represent both the perspective disc and the  back orthogonal projection view of the sphere onto the observer's plane, with $\ppf$ in the perspective disc coinciding with $\spp{O}$ in the orthogonal view and the disc scaled in such a way that the equator's perspective image coincides with its orthogonal projection image. Let the orthogonal image of $\spp{P}$ be $\orthoback{P}$. Plot $\orthoback{P}$ from its (x,z) coordinates. Then $\overrightarrow{\ppf \orthoback{P}}$  is the orthogonal projection of $\overrightarrow{\spp{O}\spp{P}}$ and $P$ is the intersection of  $\overrightarrow{\ppf \orthoback{P}}$   with the equator of the perspective disc. 
\end{construction}

The problem of solving a scene can be divided into two parts: plotting points and lines in the anterior half-space and in the posterior half-space. The anterior half-space is solved in \cite{BarreFlocon}. We will give here a very condensed version of that method, adapted to our needs.

\subsection{Solving the anterior hemisphere}

It is well known (\cite{Bouligand}, \cite{BarreFlocon}) that the perspective image of lines in the anterior half-space is well approximated by arcs of circle. This is important for two reasons: the first is that in drawing practice, arcs of circle are easy to trace with ruler and compass or even freehand; the second is that three points determine a unique arc of circle\footnote{Given three non-collinear points $P,Q,R$ on a plane, find the perpendicular bisectors of $\overline{PQ}$ and $\overline{QR}$ and intersect them to find the center of the circle. But the freehand draughtsman uses a different strategy: rather than attempt to find the center he eyeballs a line of constant curvature through $P,Q,R$. A draughtsman practices lines of constant curvature just as he does lines of zero curvature (straight lines).} so that few points have to be found.

We have to consider two cases: frontal and receding lines.

\subsubsection{Images of frontal lines.}

We say that a plane is \em frontal \em if it is parallel to the observer's plane. We say that a line is frontal if it lies on a frontal plane.
Let $l$ be a line on a frontal plane $H$. First suppose that $H$ is not the observer's plane.
Translating $l$ to $\spp{O}$ we find it has two vanishing points $\spp{V}$ and $\antip{\spp{V}}$ which define diametrically opposite points on the sphere's equator. Their images are found by drawing the translated line directly on the perspective disc, to obtain its intersection with the disc's equator (as in construction \ref{on_equator}). Next, we find a third point. If $l$ is not vertical, it intersects the sagittal plane at some point $\spp{P}$. We plot the measure of the angle  $\angle \spp{P}\spp{O}\spf$ on the vertical measuring line. If $l$ is vertical then it crosses the plane of the horizon and we measure instead the angle with the central axis at this point, and plot it on the horizontal measuring line. The image of $l$ is well approximated by the arc of circle $VP\antip{V}$ (see fig. \ref{anterior_lines}). If $\spp{P} \in \overrightarrow{\spp{O}\spf}$ then $P\equiv \ppf$, so $l$ projects onto a diameter of the disc.

Now suppose that $H$ is the observer's plane. We get $V$ and $\antip{V}$ as above, but $P$ will now project on the equator of the perspective disc. The arc of circle will be one half of the equator.

Note: The natural angles to measure with a theodolite when drawing from nature are those on the horizontal and vertical measuring lines - hence our focus on those measurements.

\begin{construction}\label{arbitrary_anterior}
Perspective of an arbitrary point $P$ on the anterior half-space. Consider the frontal plane going through $\spp{P}$ and on it a vertical line $v$ and a horizontal line $h$ going through $\spp{P}$. We already know how to solve these lines. The perspective image of $\spp{P}$ will be found at the intersection of the images of $v$ and $h$.
\end{construction}

\subsubsection{Images of receding lines.}

We say that a line is a \em receding line \em if it intersects the observer's plane at a single point.
Let $\spp{P}$ be the point of intersection of a receding line $l$ with the observer's plane. We plot P as in construction \ref{on_equator}. The plane $H$ defined by $\spp{O}$ and $l$ must also intersect the equator at the antipodal point  $\antip{P}$. To find a third point, we translate $l$ to $\spp{O}$ and intersect it with the sphere to find the two vanishing points. One of these will be on the anterior hemisphere, so we plot it by construction \ref{arbitrary_anterior}. Let its image be $V$. We trace the auxiliary arc of circle $PV\antip{P}$ that is the image of the plane $H$ in the anterior disc. The anterior image of $l$ will be the part of the arc that lies between $V$ and $P$.

If $l$ lies on a plane  through an $\spf$-meridian, it will project into a diameter of the disc. A particular case is that of the central lines. We say that a line is central if it is perpendicular to the observer's plane. In this case $V \equiv\ppf$, hence $V$ will be between $P$ and $\antip{P}$, the image of $H$ will be the straight line segment $P\antip{P}$ and the image of $l$ will be the segment $\overline{P\ppf}$  (see fig. \ref{anterior_lines}). Hence, central lines project as in classical perspective.

\begin{figure}
\centering
\includegraphics[height=10cm]{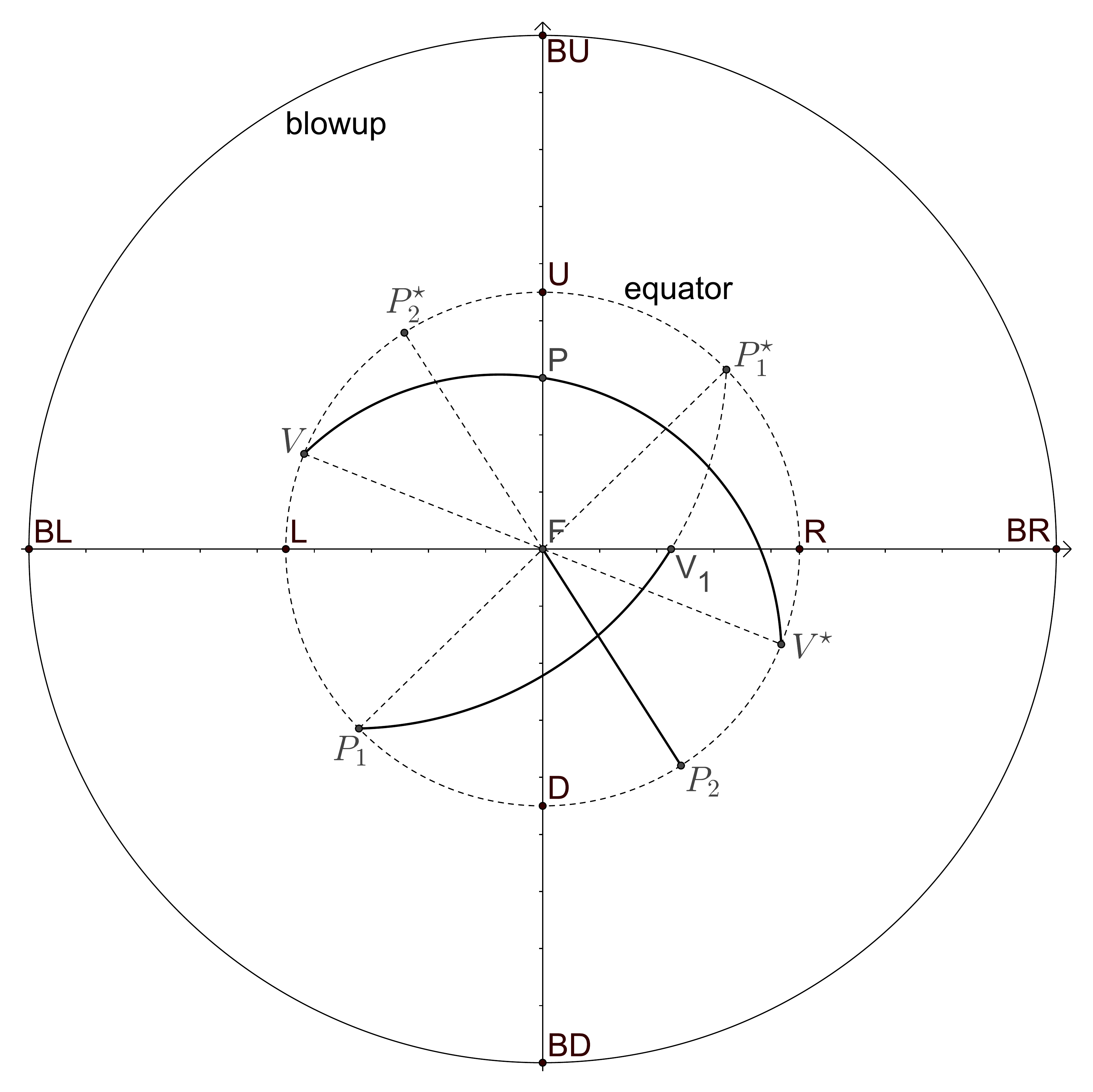}
\caption{Perspective images of lines restricted to the anterior hemisphere. Arc $VP\antip{V}$ is the image of a frontal line. Arc $P_1V_1\antip{P_1}$ is the image of the plane of a receding line, and the arc from $P_1$ to $V_1$ is the image of the line itself. The line segment $\overline{P_2\ppf}$ is the image of a central line. The image of its plane extends to the image of the antipode of $P_2$. }\label{anterior_lines}
\end{figure}

This ends our condensed review of (hemi)spherical perspective as presented in \cite{BarreFlocon}. Outside of the anterior   disc the images of lines are no longer well approximated by circles. This is one of the reasons why Barre and Flocon's perspective was limited to $180^\circ$. We will now show how to extend it to the full $360^\circ$ view.

\subsection{The full $360^\circ$}

We now wish to project the full image of a generic spatial line. We know lines project onto meridians. It is best to start by solving the complete great circle and then delimit the meridian by finding its end points. 
Our strategy is to piggyback on the known procedure for the anterior half space and use it to obtain a plot of the full great circle. The key lies in plotting antipodal points.
On what follows, let $r$ be the radius of the perspective disc (the size of the sphere and of the perspective disc are arbitrary up to a choice of scale factor).

\subsubsection{Plotting antipodal points}

\begin{proposition}\label{ruler}
Let $\spp{P}$ be a point in space such that $\spf \neq \spp{P}\neq \spp{O}$. Then $\antip{P}$ is the point on $\overrightarrow{P\ppf}$ such that $|P\antip{P}|=r$.
\end{proposition}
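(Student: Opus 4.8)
\emph{Proof plan.} The plan is to reduce the statement to the measuring line (diameter of the perspective disc) through $P$ and $\ppf$, along which the flattening acts by arc length, and then simply read off the two signed positions.

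First I would observe that, $\Lambda$ being a central projection, both $P=p(\spp P)$ and the point $\antip P$ we are after depend only on the ray $\overrightarrow{\spp O\spp P}$: writing $\hat{\spp P}=\Lambda(\spp P)\in S^2$ for the anamorphic image of $\spp P$, we have $P=\pi(\hat{\spp P})$ and, by the meaning of the notation, $\antip P=\pi(\antip{\hat{\spp P}})$. The two points $\hat{\spp P}$ and $\antip{\hat{\spp P}}$ lie on a common great circle $C$ through $\spf$ and $\spb$ (unique unless $\hat{\spp P}=\spb$, in which case any such $C$ will serve). By construction $C$ is the union of two central meridians, one carrying $\hat{\spp P}$ and the other $\antip{\hat{\spp P}}$; hence, by the consequence of $\mathcal C_0$--$\mathcal C_2$ already noted in the text --- the images of the two meridians of a great circle through $\spf$ form a diameter of the perspective disc, along which distances are preserved --- the circle $C$ flattens onto a measuring line $m$ through $\ppf$, and $\pi|_C$ is an isometry onto $m$ up to the fixed scale factor $r/\pi$ relating a central meridian's angular length $\pi$ to a disc radius of length $r$.

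Next I would locate the two images on $m$. Measuring arc length along $C$ from $\spf$, the point $\hat{\spp P}$ sits at angular distance $\lambda=\angle\spp P\spp O\spf$ on one side of $\spf$, while its antipode $\antip{\hat{\spp P}}$ sits at distance $\pi-\lambda$ on the other side --- because $\hat{\spp P}$ and $\antip{\hat{\spp P}}$ are antipodal, so the arc of $C$ from $\spf$ to $\antip{\hat{\spp P}}$ avoiding $\hat{\spp P}$ has length $\pi-\lambda$. Transporting through the isometry, $P$ lies at distance $\lambda$ from $\ppf$ along $m$ and $\antip P$ at distance $\pi-\lambda$ from $\ppf$ on the opposite side; equivalently $\ppf$ lies on the segment $\overline{P\antip P}$ (degenerating to $\antip P=\ppf$ exactly when $\lambda=\pi$, i.e. $\hat{\spp P}=\spb$). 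Therefore $\antip P\in\overrightarrow{P\ppf}$ and $|P\antip P|=\lambda+(\pi-\lambda)=\pi$, which is $r$ in the disc's units. The hypothesis $\spp P\neq\spp O$ is what makes $\hat{\spp P}$ defined, and $\spp P\neq\spf$ (read as $\overrightarrow{\spp O\spp P}$ not being the front ray, so that $P\neq\ppf$) is what makes the ray $\overrightarrow{P\ppf}$ meaningful.

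If one prefers a purely computational check to the synthetic one, it suffices to substitute into \eqref{trivial}: in the natural coordinates $P=\lambda(\cos\theta,\sin\theta)$, while the opposite ray has coordinates $\lambda'=\pi-\lambda$ and $xz$-direction $-(\cos\theta,\sin\theta)$, so $\antip P=(\pi-\lambda)\bigl(-(\cos\theta,\sin\theta)\bigr)=(\lambda-\pi)(\cos\theta,\sin\theta)$; collinearity of $P$, $\ppf=(0,0)$, $\antip P$, the fact that $\ppf$ separates them (as $\lambda\ge 0\ge\lambda-\pi$), and $|P\antip P|=\pi=r$ are then immediate. I do not anticipate a genuine obstacle; the only care needed is the orientation bookkeeping (which side of $\ppf$ each point falls on) and the degenerate directions $\hat{\spp P}\in\{\spf,\spb\}$, both disposed of above.
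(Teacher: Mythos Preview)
Your proof is correct and follows essentially the same route as the paper's: both take the great circle through $\spf$ and (the anamorphic image of) $\spp P$, observe that it flattens to a measuring line through $\ppf$, and then use the isometry along that diameter to read off that $P$ and $\antip P$ lie on opposite sides of $\ppf$ at distances summing to $\pi=r$. The paper's version is terser---it argues directly via the arc $\spp P\spf\antip{\spp P}$ having angular length $180^\circ$---while you compute the two half-lengths $\lambda$ and $\pi-\lambda$ separately and add them, and additionally supply the coordinate check via \eqref{trivial}; but the underlying argument is the same.
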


\begin{proof}
Let $C$ be the great circle through $\spf$ and $\spp{P}$. Since $\spf\in C$, the image of $C$ is a measuring line. $C$ contains the $P$-meridian $G=\spp{P}\spf\antip{\spp{P}}$. Since $\spb \not \in G$ and the flattening is continuous, the image of $C$ is connected and preserves the ordering of points, so $G$ projects to the segment $\overline{P\antip{P}}$ and $\antip{P}\in \overrightarrow{P\ppf}$. Since $G$ is on a measuring line and $\angle{\spp{P}\spp{O}\antip{\spp{P}}}=180^\circ$, then $|P\antip{P}|=r$.
\end{proof}

\,

Intuitively: on the sphere, in order to find $\spp{\antip{P}}$ from $\spp{P}$ we can follow the single $\spf$-great circle that crosses $\spp{P}$, for a length of 180 degrees along the direction that crosses $\spf$. But $\spf$-great circles flatten onto diameters along which lengths are preserved. So find $\antip{P}$ from $P$ by following the diameter for half its length along the $\overrightarrow{P\ppf}$ direction.

This proposition allows us to easily plot the antipode of an already plotted point $P$. Just draw line $\ppf P$, open the compass from $P$ with radius equal to the radius of the perspective disc, and intersect with $\ppf P$ to find $\antip{P}$. Or, if using a marked ruler, pass the ruler through $P$ and $\ppf$ with the zero mark at $P$, and plot $\antip{P}$ where the ruler marks $r$.

For the purposes of freehand drawing of a perspective it is often useful, when plotting points nearer to the equator than to $\ppf$, to use instead the following result:

\begin{proposition}\label{freehand}
Let $\spp{P}$ be a point in space such that $\spf \neq \spp{P}\neq \spp{O}$. Let $P_\ppb$ be the intersection of $\overrightarrow{P\ppf}$ with the blowup of $\spb$. Then $\antip{P}$ is the point on $\overrightarrow{P\ppf}$ such that $\left|\overline{\antip{P}P_\ppb}\right|=\left|\overline{\ppf P}\right|$. Also, $\left|\overline{\antip{P}\ppf}\right|=\left|\overline{P(-P_\ppb)}\right|$, where $-P_\ppb$ is the point on the perspective disc diametrically opposite to $P_\ppb$.
\end{proposition}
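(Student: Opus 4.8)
The plan is to reduce the statement to one-dimensional arithmetic along the measuring line through $\spf$ and $\spp{P}$, taking Proposition \ref{ruler} as the starting point. As in that proposition, the great circle $C$ through $\spf$ and $\spp{P}$ flattens onto a measuring line, i.e.\ a diameter of the perspective disc of length $2r$, along which lengths are preserved (condition $\mathcal{C}_1$). By construction the blowup of $\spb$ is the boundary circle $cl(D)\setminus D$ of radius $r$ centred at $\ppf$, so $\ppf$ is the midpoint of this diameter, $P_\ppb$ is its endpoint on the opposite side of $\ppf$ from $P$, and $-P_\ppb$ is the other endpoint.

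Next I would parametrise the line by signed distance from $\ppf$, with the positive direction pointing from $\ppf$ towards $P_\ppb$. Writing $d=|\overline{\ppf P}|$ we get $\ppf\leftrightarrow 0$, $P_\ppb\leftrightarrow r$, $-P_\ppb\leftrightarrow -r$, $P\leftrightarrow -d$; here $d>0$ since $\spp{P}\neq\spf$, and $d<r$ since $P\in D$ (with $d=r/2$ exactly when $\spp{P}$ lies on the observer's plane). By Proposition \ref{ruler}, $\antip{P}$ lies on $\overrightarrow{P\ppf}$ at distance $r$ from $P$, which in this coordinate is the point $-d+r=r-d$; since $0<r-d<r$, the points occur in the order $-P_\ppb,\,P,\,\ppf,\,\antip{P},\,P_\ppb$. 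The two claimed identities are then immediate: $|\overline{\antip{P}P_\ppb}| = r-(r-d) = d = |\overline{\ppf P}|$, and $|\overline{\antip{P}\ppf}| = r-d = |(-d)-(-r)| = |\overline{P(-P_\ppb)}|$.

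I do not expect a genuine obstacle. The only points needing a word of care are that $\overrightarrow{P\ppf}$ meets the blowup in a single point lying \emph{beyond} $\ppf$ (so that the coordinate of $P_\ppb$ is $+r$, not $-r$), which follows from $0<d<r$ together with $P_\ppb$ being on the prolongation of $\overline{P\ppf}$ through the centre; and that, among the two points of $\overrightarrow{P\ppf}$ at distance $d$ from $P_\ppb$, one must select the one inside the disc — but that is precisely the point given by Proposition \ref{ruler}, so invoking that result removes the ambiguity. Everything else is the arithmetic displayed above.
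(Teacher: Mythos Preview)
Your argument is correct. It differs from the paper's proof mainly in what it takes as primitive: you treat Proposition~\ref{ruler} as a black box, parametrise the measuring line by signed distance from $\ppf$, and reduce both identities to the arithmetic $r-(r-d)=d$ and $(r-d)-0=(-d)-(-r)$. The paper instead gives a self-contained argument on the sphere: in the plane $\spf\spp{O}\spp{P}$ the two diameters $\spp{P}\antip{\spp{P}}$ and $\spf\spb$ cross at $\spp{O}$, so the vertical-angle equalities $\angle\spp{P}\spp{O}\spf=\angle\antip{\spp{P}}\spp{O}\spb$ and $\angle\spp{P}\spp{O}\spb=\angle\antip{\spp{P}}\spp{O}\spf$ hold, and after flattening (with order preserved by continuity and lengths by $\mathcal{C}_1$) these become the two claimed length equalities directly. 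Your route is cleaner once Proposition~\ref{ruler} is in hand and makes the dependency structure explicit; the paper's route is more geometric and stands on its own. Both establish the ordering $(-P_\ppb),P,\ppf,\antip{P},P_\ppb$ along the diameter, which is the only point where care is needed, and you handle it correctly via $0<d<r$.
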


\begin{proof}
The plane $H=\spf \spp{O}\spp{P}$ defines a great circle $C$ that contains $\spp{P}, \spp{\antip{P}}, \spf$, and $\spb$. On that plane, the lines $\spp{P}\spp{\antip{P}}$ and $\spf \spb$ intersect at $\spp{O}$, and therefore we have the equalities between opposing angles $\angle{\spp{P}\spp{O}\spf}=\angle{\antip{\spp{P}}\spp{O}\spb}$ and $\angle{\spp{P}\spp{O}\spb}=\angle{\antip{\spp{P}}\spp{O}\spf}$. Since $C$ is a great circle through $\spf$, $\tilde{\pi}^{-1}(C)$ is a diameter of the perspective disc. On $C$ we have a cyclic order of points $\spp{P}-\spf-\antip{\spp{P}}-\spb$. Since $\tilde{\pi}$ is continuous, the order is preserved on the perspective image and we have $(-P_\ppb)-P-\ppf-\antip{P}-P_\ppb$ where $P_\ppb$ and $(-P_\ppb)$ are the points of the blowup corresponding to the directions of the two meridians of $C$ at $\spb$. Because distances are preserved along measuring lines, the two angle equalities above imply $\left|\overline{P\ppf}\right|=\left|\overline{\antip{P}P_\ppb}\right|$ and  $\left|\overline{P(-P_\ppb)}\right|=\left|\overline{\antip{P}\ppf}\right|$ respectively.
\end{proof}

The practical interest of this proposition lies in the fact that for freehand plotting of lines in the full spherical perspective it is often easier to transport the measurement $\left|\overline{P\ppf}\right|$ by eye than to transport the radius of the disc without an actual compass or ruler. But, having a compass at hand, or a marked ruler, the use of proposition \ref{ruler} makes for very efficient plotting of antipodes.

Now we can plot the image of a great circle's posterior meridian from the image of its anterior meridian:

\begin{construction}\label{makefatline}
Construction of fat lines: Let $C_a$ be the perspective image of the anterior meridian of a great circle $C$ on the sphere. To obtain an approximation of the posterior image $C_p$ of $C$, trace an arbitrary number of measuring lines $m_1, \ldots, m_K$ through $\spf$. Intersect each of these lines  with $C_a$ to get  points $Y_1, \ldots, Y_K$, and use proposition \ref{ruler} to
to obtain the antipodes $\antip{Y}_i$. Through each successive three of these points we trace an arc of circle, thus getting overlapping arcs $\antip{Y}_1\antip{Y}_2\antip{Y}_3$, $\antip{Y}_2\antip{Y}_3\antip{Y}_4$, etc. These overlapping arcs form a \em ``fat line'' \em that approximates $C_p$. The degree to which successive arcs fail to exactly overlap (how ``fat'' the envelope of these arcs is) indicates the amount of error in the approximation and the need to increase the number of measuring lines $m_i$\footnote{Alternatively just plot successive non-overlapping arcs and judge the error by how much the tangents differ at the endpoints of successive arcs.}(see fig. \ref{fatlinesexample}).
\end{construction}
A trained draughtsman can easily eyeball a line of constant curvature between three well spaced points, so that for freehand drawing the arcs of circle in this construction can be quickly obtained without actually finding their centers. The following construction is a very efficient way to obtain quickly as many points as necessary for the interpolation:
\begin{construction} Ruler, Compass and Nail: The practical draughtsman, being given $C_a$, and armed with a marked ruler, will simplify construction \ref{makefatline} as follows. Suppose the ruler has a zero mark and an $r$ (radius of the disk) mark. Stick a nail on the center of the perspective disc, and sliding the ruler against the nail to ensure that it always touches $\ppf$, make its zero mark slide along the curve $C_a$. Then the $r$ mark will automatically slide along the antipodal curve $C_p$, and one can easily plot a great number of antipodal points very quickly, allowing $C_p$ to be interpolated by hand with good precision by joining each set of three successive points with arcs of constant curvature (see fig. \ref{fatlinesexample}).
\end{construction}

 It is easy to imagine a simple mechanical device to make this construction even more efficient: a ruler with a slit along its length, the length of the slit defined by a sliding stopper, so that in each drawing it would be fixed to the radius of the perspective disk. On one end of the slit there would be a spotter and on the other end a pencil point. As the user follows half of a meridian with the spotting end, the nail slides along the slit and the pencil end automatically traces the antipodal meridian in a continuous line, with no interpolation needed. A further adaptation of this device would allow the compass tracing the anterior meridian to guide the pencil end, tracing also the posterior meridian in the same motion.

With practice none such refinements are needed. Even the nail can remain merely conceptual, although a physical one can make quite a difference in efficiency (do try it with a thumbtack!). 
  
We are now ready to plot arbitrary lines in full perspective. 
We have the following cases:

\subsubsection{Images of frontal posterior lines.}

Let $l$ be a line in a frontal posterior plane. Let $H$ be the plane defined by $l$ and $\spp{O}$, and $C$ its great circle. Suppose $l$ is not vertical. Then $l$ crosses the sagital plane at a point $\spp{P}$, and $P$ will be a point on the posterior ring of the perspective disc, such that $|\ppf P|=\angle \spp{P}\spp{O}\spf$, on $\overrightarrow{\ppf \ppu}$ or on $\overrightarrow{\ppf \ppd}$ according to whether $\spp{P}$ is above or below the observer. By proposition \ref{ruler}, the antipode of $\spp{P}$ will map to the point $\antip{P}\in \overrightarrow{P\ppf}$ such that $|P\antip{P}|=r$. This point will be in the anterior perspective disc, therefore we can approximate the anterior image of $C$ by the arc of circle $C_a=V\antip{P}\antip{V}$, where $V$ and $\antip{V}$  are two vanishing points at the equator.  We can now use construction \ref{makefatline} to obtain the fat line approximation of the antipodal image $C_p$. Then the full image of $C$ will be $C_a\cup C_p$ and the image of the $l$ will be the $C_p$ meridian. (see fig. \ref{posteriorlinesfig}). Note that if $\spp{P} \equiv \spb$ then $l$ flattens to two disconnected line segments: a diameter of the full disc minus its intersection with the inner disc. This line is however connected when considered in the  topology induced by $\tilde{\pi}$, since the blowup of $\spb$ - seen as a single point - connects both segments.

\begin{construction}
We can now plot an arbitrary point $\spp{P}$ on the posterior half-space: pass vertical and horizontal lines through $\spp{P}$, plot them according to the procedure just described, and intersect their images to find $P$.
\end{construction}

\subsubsection{ Images of receding lines. }

Let $l$ be a line that crosses the observer's plane at a single point $\spp{P}$. Let $H$ be the plane defined by $l$ and $\spp{O}$, and $C$ its great circle.
By construction \ref{on_equator} and proposition \ref{ruler} we obtain the points $P$ and $\antip{P}$ on the perspective disc's equator.
Displacing $l$ to the origin we obtain two vanishing points; one on the anterior hemisphere, Let it be $\spp{V}$, and its antipode $\antip{\spp{V}}$ on the posterior hemisphere. Plot $V$ by construction \ref{arbitrary_anterior}, then use proposition $\ref{ruler}$ to plot $\antip{V}$.  The arc of circle $C_a=PV\antip{P}$ is the anterior image of $C$. From that $C_a$ plot the antipodal meridian $C_p$ by construction \ref{makefatline}. This plots the full image of the great circle $C$. To get the image of $l$, discard the arc $V\antip{P}\antip{V}$ (see fig. \ref{posteriorlinesfig}).

If $l$ is on the plane of an $\spf$-meridian, it will project into a measuring line. 
In the particular case in which  $l$ is a central line, then $\spp{V}\equiv \spf$ and $\antip{V}\equiv \spb$, 
and the image of $l$ will be a radius of the perspective disc. The intersection point of $\overrightarrow{\ppf P}$ with the blowup circle codifies both the vanishing point $\spb$ itself and the direction (or the meridian) from which it is approached as the line of sight follows $l$ to infinity. 

%
%
\begin{figure}
\includegraphics[height=12cm]{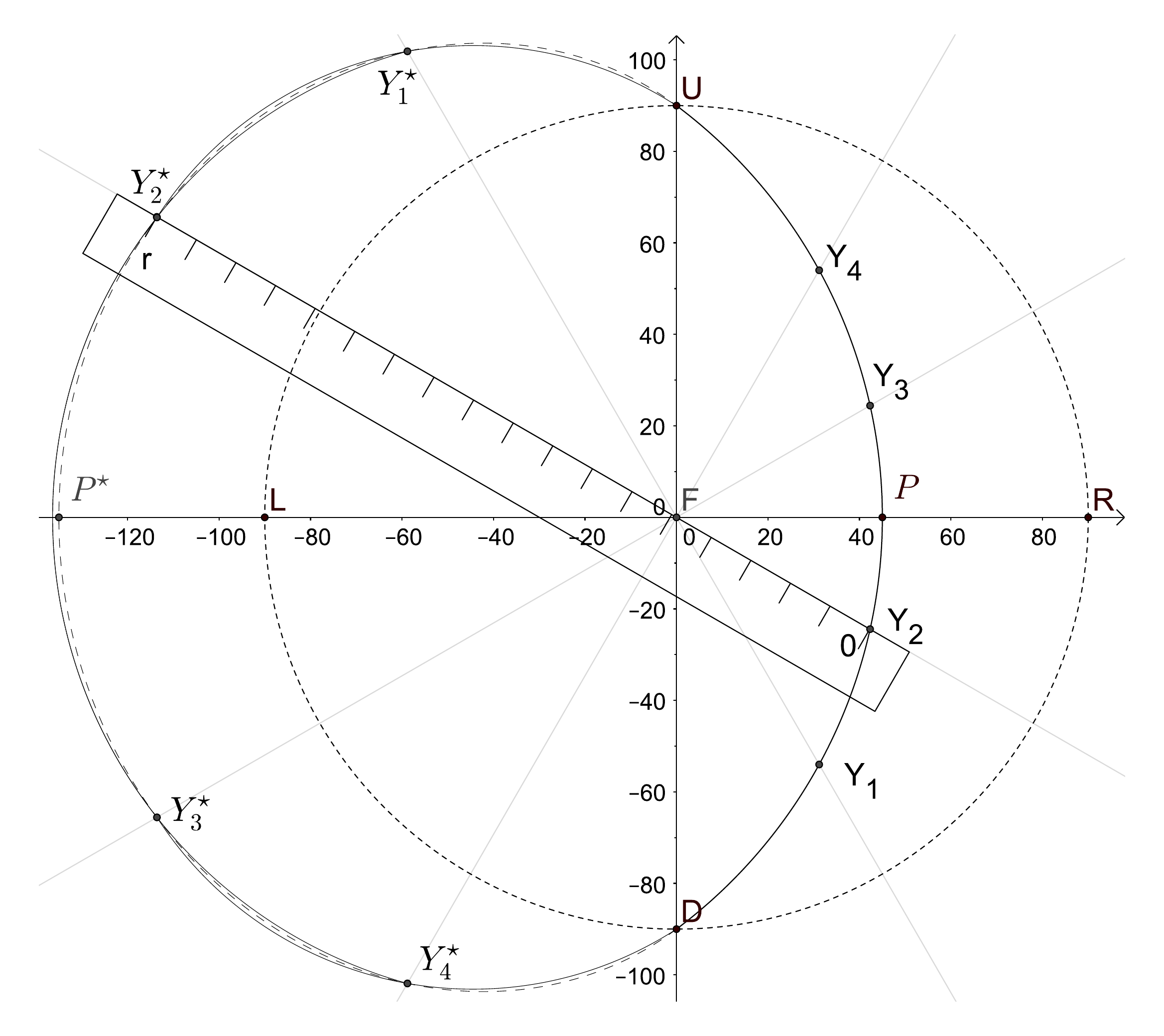}
\caption{Perspective image of a $\spu \spd$ great circle placed at 45 degrees to the observer's right. The dashed line $\ppu\antip{P}\ppd$ is the exact antipodal line of the arc of circle $\ppu P \ppd$ on the anterior view. The filled lines are a 'fat line' approximation to this line, obtained by interpolating through the antipodes of the four points $Y_i$, obtained by intersection with measuring lines set at 30 and 60 degrees to the horizontal axis. Even this coarse approximation fails at its worst by little more than one degree, but a great number of points could be obtained quickly through ruler and nail: stick a nail at point \ppf; then, as you lead the 0 mark of the ruler over the anterior curve $\ppu P \ppd$, sliding the ruler along the nail, the r mark will automatically trace the posterior curve $\ppu \antip{P} \ppd$.}
\label{fatlinesexample}
\end{figure}
\begin{figure}
\includegraphics[height=11cm]{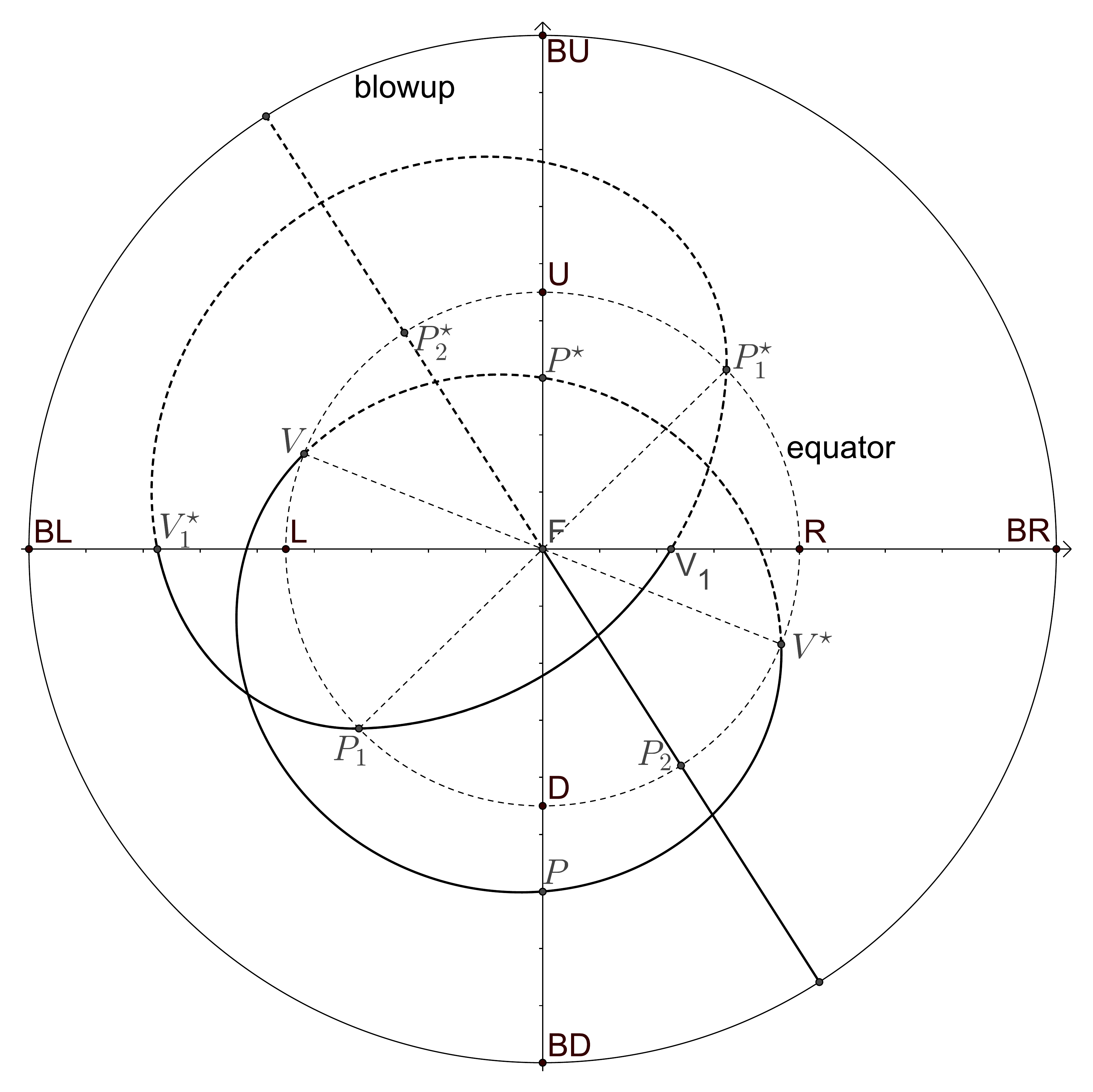}
\caption{Total spherical perspective of frontal, receding, and central lines. Arc $VP\antip{V}$ is the image of a frontal posterior line. Arc $V_1P_1\antip{V_1}$ is the image of a receding line. The radius through $P_2$
is the image of a central line. Note in each case the dashed curve that extend the image of each line to the image of its great circle. }
\label{posteriorlinesfig}
\end{figure}
%
%
%
%
%
%
%

%
%
%
\subsection{plotting curves of constant angular elevation}
What we have just learned is enough to solve a scene when we have the Cartesian coordinates of its points - for instance when drawing from an architectural plan. When drawing from observation, however, the artist  measures only the angles subtended by objects. We have already seen what the natural spherical coordinates are for this perspective (the angles $\lambda$ and $\theta$ defined above), and it is possible to construct a simple device to measure these angles directly, but the more habitual set of angles are the horizontal angle $\xi$ together with the angular elevation $\zeta$, defined thus: $\xi$ is the angle between the central ray $\overrightarrow{\spp{O}\spf}$ and the orthogonal projection of $\overrightarrow{\spp{O}\spp{P}}$ against the plane of the horizon. $\zeta$ is the angle between  $\overrightarrow{\spp{O}\spp{P}}$ and its orthogonal projection on the plane of the horizon. These are the angles one would measure with a standard theodolite.

Lines of constant horizontal angle $\xi$ are the images of vertical lines and we already know how to plot them. 
Lines of constant angular elevation are circles on the anamorphic sphere obtained by intersection with horizontal planes. For short we will call these circles and their images \em parallels\em\footnote{This is mixing metaphors, since in keeping with the geographical analogy, parallels should be the circles of the planes parallel to the equator, not those orthogonal to the $\spu\spd$ axis, but it's a convenient term, and we will use it with apologies.}

In the anterior hemisphere we  approximate parallels by arcs of circles in the manner of \cite{BarreFlocon}:
\noindent Let $h$ be a parallel of constant angular altitude $\zeta$. $h$ intersects the sphere's equator at two points $\spp{P}_\spl$ and $\spp{P}_\spr$ on the left and right side of the sagital plane respectively and intersects the anterior sagital plane at a point $\spp{P}$. Then $P_\ppl$ and $P_\ppr$ will be at the disc's equator and $\angle P_\ppr \ppf \ppr=\angle P_\ppl \ppf \ppl=\zeta$, and $P$ will be at the vertical segment $\overline{\ppu \ppd}$, and $|\ppf P|=\angle \spf \spp{O}\spp{P}=\zeta$. We take the arc of circle $P_\ppr P P_\ppl$ as the approximation to the anterior image of the parallel $h$.
To plot the posterior part of the parallel we make use of the following proposition:

\begin{proposition}\label{plotparallels}
Let $h$ be a parallel on the anamorphic sphere. Let $\spp{P}\neq \spf$ be a point of $h$. Let $M=\overrightarrow{\ppf P}\cap \varepsilon$ where $\varepsilon$ is the equator of the perspective disc. Let $Q$ be the point such that $M$ is the midpoint of $\overline{PQ}$. Then $Q$ is the perspective image of a point of $h$.  
\end{proposition}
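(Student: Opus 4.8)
The plan is to work with the natural spherical coordinates $(\lambda,\theta)$ introduced in equation \eqref{natcoords}, where the perspective map becomes the trivial flattening $\pi(\rho,\lambda,\theta)=\lambda(\cos\theta,\sin\theta)$. A parallel $h$ of constant angular elevation $\zeta$ is the intersection of the sphere with a plane orthogonal to the $\spu\spd$ axis, so it is characterized by a fixed angle with the $z$-axis, i.e. by the condition $z/\rho = \sin\zeta$ (taking $h$ above the horizon, say). The first step is to translate this defining condition into a relation between $\lambda$ and $\theta$ on the sphere: writing $y=\rho\cos\lambda$ and $z=\rho\sin\lambda\sin\theta$, the constraint $z=\rho\sin\zeta$ becomes $\sin\lambda\,\sin\theta=\sin\zeta$, which is the equation of $h$ in these coordinates.

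Next I would exploit the symmetry of this equation. Given a point $\spp{P}\in h$ with coordinates $(\lambda,\theta)$, its image is $P=\lambda(\cos\theta,\sin\theta)$, which lies on the ray from $\ppf$ at longitude $\theta$, at distance $\lambda$ (in angle units). The ray $\overrightarrow{\ppf P}$ meets the equator of the perspective disc — which has radius $\pi/2$ — at the point $M$ at the same longitude $\theta$ but at distance $\pi/2$; so $M=\tfrac{\pi}{2}(\cos\theta,\sin\theta)$. Reflecting $P$ through $M$ along this ray gives the point $Q$ at distance $\pi-\lambda$ along the $\overrightarrow{\ppf P}$ direction, i.e. $Q=(\pi-\lambda)(\cos\theta,\sin\theta)$, corresponding to a point $\spp{Q}$ on the sphere with coordinates $(\lambda',\theta')=(\pi-\lambda,\theta)$. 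The key algebraic fact is then that $(\pi-\lambda,\theta)$ also satisfies the defining equation of $h$: indeed $\sin(\pi-\lambda)\sin\theta=\sin\lambda\sin\theta=\sin\zeta$. Hence $\spp{Q}\in h$ and $Q$ is its perspective image, as claimed.

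Geometrically this is the statement that the antipodal involution $\spp{P}\mapsto\antip{\spp{P}}$ — which by proposition \ref{ruler} acts on the disc by $\lambda\mapsto\pi-\lambda$ along a fixed measuring-line direction — carries the $\spu\spd$-reflection of $h$ onto $h$ itself; equivalently, a horizontal plane and its mirror image in the plane of the horizon cut the sphere in the same parallel, so $h$ is invariant under the reflection $z\mapsto -z$, and this reflection is realized on the perspective disc precisely by the point-reflection about the equator point $M$ on each measuring line. I would phrase the final proof along these lines, since it makes transparent why the construction also works for the posterior half (where $\lambda>\pi/2$ maps to $\lambda'<\pi/2$ and vice versa). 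I do not anticipate a genuine obstacle here; the only point needing care is the bookkeeping of which point of $h$ one lands on — $\spp{Q}$ is generally not $\antip{\spp{P}}$, but rather the $z\mapsto -z$ mirror of $\antip{\spp{P}}$ — so I would state that correspondence explicitly to avoid confusion, and handle the degenerate cases ($\spp{P}$ on the equator, where $P=Q=M$; and $\zeta=\pm 90^\circ$, where $h$ degenerates to $\spp{U}$ or $\spp{D}$) in a sentence.
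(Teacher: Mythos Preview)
Your coordinate computation in the second paragraph is correct and is precisely the analytic counterpart of the paper's proof: both identify $\spp{Q}$ as the point obtained from $\spp{P}$ by the map $(\lambda,\theta)\mapsto(\pi-\lambda,\theta)$, and both verify that the parallel $h$ is invariant under this involution. The paper does this synthetically, observing that this involution is reflection across the \emph{observer's plane} $y=0$ (since $\cos(\pi-\lambda)=-\cos\lambda$ while $\sin(\pi-\lambda)\cos\theta$ and $\sin(\pi-\lambda)\sin\theta$ are unchanged, i.e.\ $(x,y,z)\mapsto(x,-y,z)$), and that both the horizontal plane defining $h$ and the plane of the $\spf$-meridian through $\spp{P}$ are fixed by this reflection.

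Your third paragraph, however, misidentifies the symmetry. The parallel $h$ is \emph{not} invariant under $z\mapsto -z$: that reflection sends the parallel at elevation $\zeta$ to the one at $-\zeta$. The map you actually used, $(\lambda,\theta)\mapsto(\pi-\lambda,\theta)$, is reflection across the observer's plane $y=0$, not across the plane of the horizon $z=0$; and $\spp{Q}$ is the $y\mapsto -y$ mirror of $\spp{P}$ (not the $z\mapsto -z$ mirror of $\antip{\spp{P}}$, which would be $(-x,-y,z)$). If you ``phrase the final proof along these lines'' as written, it will be wrong. Keep the algebraic argument of your second paragraph, or rephrase the geometry with the correct reflection --- that is exactly what the paper does.
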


\begin{proof} 
Parallels and $\spf$-meridians are invariant by reflection across the observer's plane (because so are their defining planes and the sphere itself and hence their intersection). Then the intersection of a parallel and an $\spf$-meridian is also invariant for reflection across the observer's plane, and since it is an intersection of circles, it is made up of a whole circle, or of zero, one, or two mirror symmetric points. 
Let $m\subset \overrightarrow{\ppf P}$ be the radius through $P$. $m$ is the image of the $\spf$-meridian $C$ that crosses $\spp{P}$. Hence $\spp{M}=\tilde{\pi}(M)$ is the point where $C$ crosses the sphere's equator.  Since $|PM|=|MQ|$ and $m$  is a measuring line, then $\angle \spp{P}\spp{O}\spp{M}=\angle \spp{M}\spp{O}\spp{Q}$, and since $P$ and $Q$ lie on the plane of $C$, orthogonal to the observer's plane, then $P$ and $Q$ are mirror symmetric relative to the observer's plane, hence $Q$ is on $h$.
\end{proof}
\begin{construction}
To plot the posterior half of a parallel $h$, plot first the anterior half $h_a$ as an arc of circle, then plot a set of measuring lines $r_i$, intersect them with $h_a$ at points $Y_i$, find the antipodal points $\antip{Y_i}$ from proposition \ref{plotparallels}, and trace a fat line through the $\antip{Y_i}$.
\end{construction}
Figure \ref{elevationfig}.a) shows a computer plot of parallels and verticals calculated directly from map \ref{exactxyz}. Figure \ref{elevationfig}.b) shows the approximation of the parallels of elevation 10, 45, 80, and 85 degrees plotted by proposition \ref{plotparallels} applied to the inner disc approximation. We see that the curves are not smooth at the equator, this being more noticeable when closer to $\spu$. This is an artefact of the approximations, as we see from equation \ref{exactxyz} that the perspective images of constant elevation curves are differentiable. The error stems not from proposition \ref{plotparallels}, which is exact, but from the initial approximation of the parallel by an arc of circle inside the anterior disc. Near the equator one should favour the method of the previous section instead. The practical draughtsman will however just smooth the edges at the equator and use parallels whenever convenient.

\begin{figure}
   \centering
    \subfloat[Grid of verticals and parallels, plotted directly from function \ref{exactxyz}. ]{{\includegraphics[width=12cm]{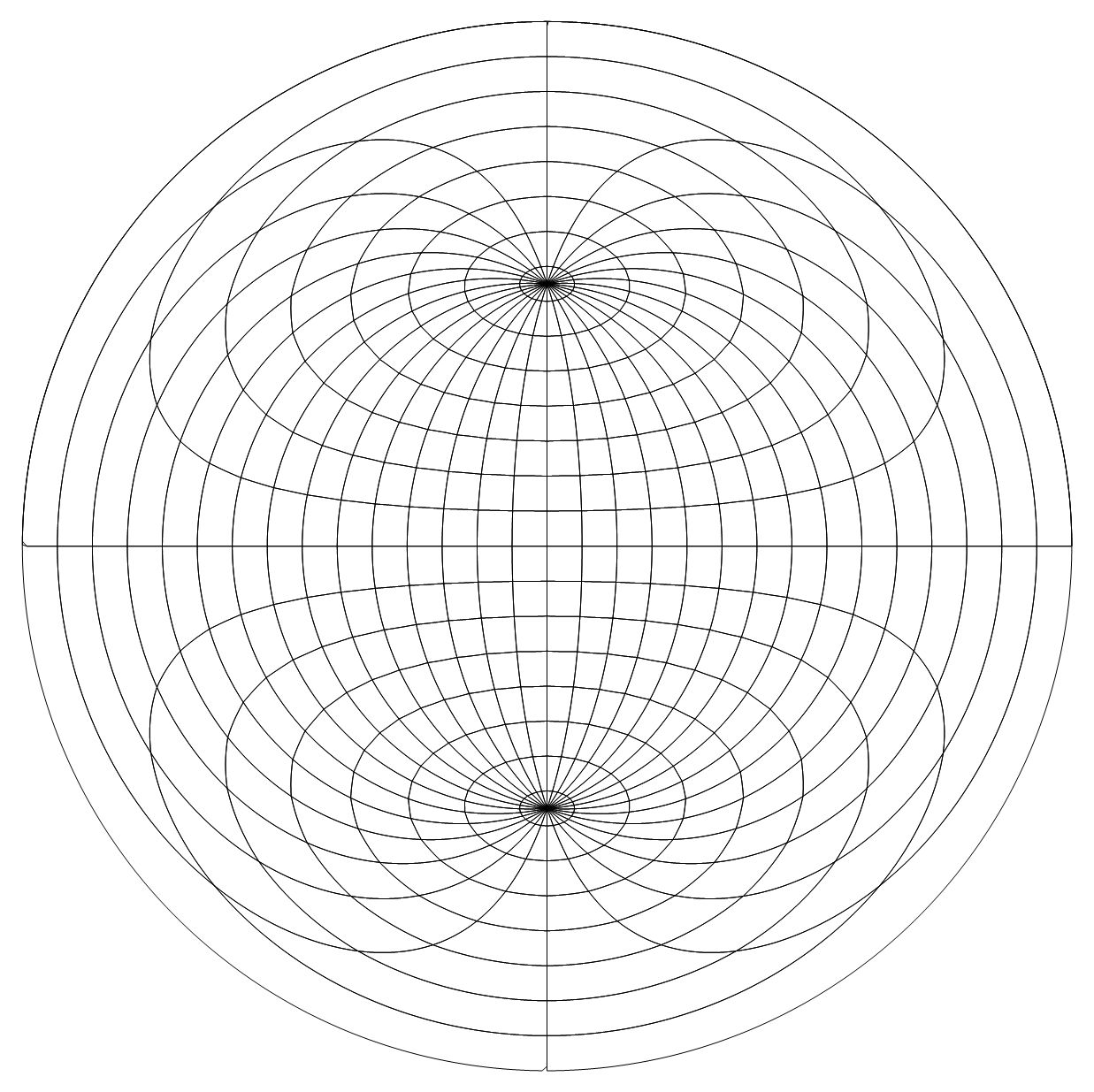} }}
   \qquad
   \subfloat[Ruler and compass approximation of curves of constant angular elevation of 10,45,80, and 85 degrees. Note the break of differentiability at the equator.]{{\includegraphics[width=12cm]{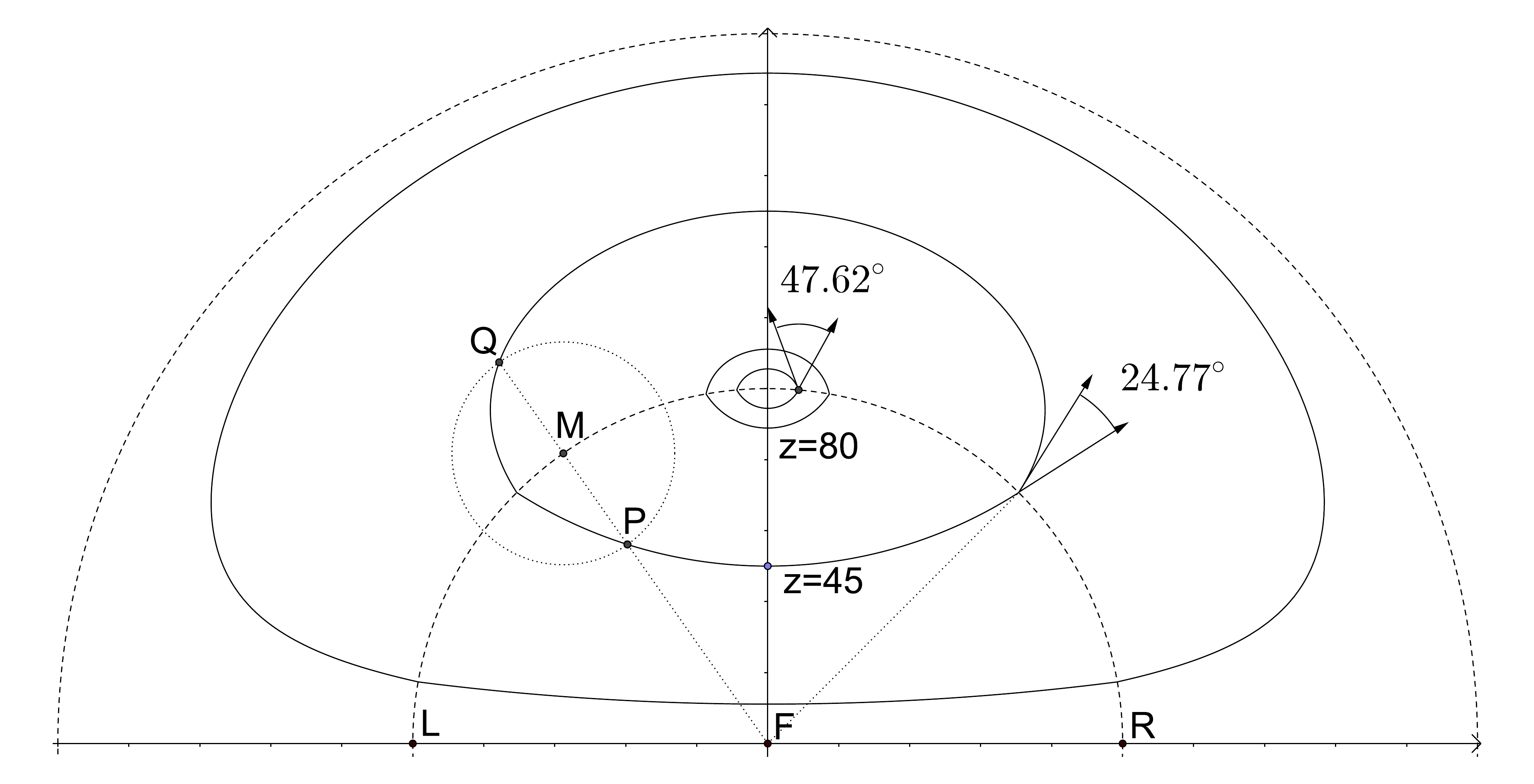} }}%
    \caption{Lines of constant angular elevation}%
   \label{elevationfig}
\end{figure}


\section{Examples}

As is well known in classical perspective drawing, as long as we can plot a grid of squares we can plot any object to any required precision, by caging it inside a fine enough grid and interpolating through the intermediate points. We will therefore concern ourselves with the basic examples of grid construction.

In fig. \ref{onepoint360} we build the image of a central uniform perspective grid. We consider a horizontal grid of squares (a tiled floor) with one axis parallel to $\overrightarrow{\spp{O}\spf}$ and the other parallel to $\overrightarrow{\spl\spr}$. For simplicity assume one of the grid's vertices is directly under the observer. Call ground plane to the plane of the grid and ground line to the intersection of the ground plane with the observer's plane. We make the plane of perspective represent also a top and a back orthogonal view of the scene. We make the back view of $\spp{O}$ coincide with $\ppf$, and scale the sphere to make it tangent to the ground plane at $\spd$. We make the top view of $\spp{O}$ coincide $\ppd$. In this way, a horizontal line through $\ppd$ represents both the ground plane on the back view and the observer's plane on top view. There is a grid line coincident with the ground line, and the receding lines of the grid intersect it at points $\spp{P_i}$ whose images $\orthoback{P_i}$ in back view are uniformly spaced. Since $\spp{P_i}$ is on the observer's plane, $P_i$ is obtained by intersecting ray $\overrightarrow{\ppf \orthoback{P_i}}$ with the equator by construction \ref{on_equator}. This ray, extended up to the blowup, is the perspective image of the central receding line of the grid that crosses $\spp{P_i}$. Thus the image of the receding lines of the grid is a set of radii $l_i$ going from $\ppf$ to the blowup, through the uniformly spaced $\orthoback{P_i}$. Note that this is analogous to the same construction in classical perspective, though with a different interpretation.

To plot the frontal lines of the grid we first trace a line $g$ on the ground plane, such that $g$ makes a 45 degree angle to the right of the observer and crosses $\ppd$. On top view we see that $g$ will diagonally cross a single square of each row of the grid. Hence it will touch each $l_i$ at a vertex of the grid. We plot the great circle $C$ of the plane defined by $O$ and $g$. First we plot the anterior half by drawing the arc $C_a=\ppd V \ppu$ where $V$ is the anterior  vanishing point of $g$, that lies on the $\ppl\ppr$ axis, 45 degrees to the right of $\ppf$. At each intersection of $C_a$ with an $l_i$ we mark a vertex of the grid, $G_i$,  and through it run a frontal line of the grid, drawing the arc of circle $\ppl G_i \ppr$.
For the $l_i$ that intersect $C$ on the posterior ring, intersect the antipodal line of $l_i$ (that is, the radius through $\antip{P_i}$) with $C_a$ to get a point $\antip{G_i}$, and take the antipode to find $G_i$, the vertex in the posterior ring. Draw the auxiliary frontal line $\ppr \antip{G_i}\ppl$, then construct its antipodal line $\ppr G_i \ppl$, using the $l_i$ as the natural measuring lines to draw its fat line approximation. This line $\ppr G_i \ppl$ is the frontal posterior grid axis through $G_i$. 
In this fashion we can plot the full 360 degree grid to any required precision and extension. 
Note that construction is analogous to that of a 1-point perspective grid in linear perspective, but here we get four vanishing points (counting the blowup circle  a single vanishing point), and we get six if we repeat the construction for the verticals (fig. \ref{onepoint360}, \ref{room45grid}).

In fig. \ref{room45grid} we represent a tiled cubic room drawn from the point of view of an observer at its center, looking straight into the center of one of the walls. The whole setup is drawn very simply from a judicious use of vertical and horizontal lines at 45 degrees to the observer; these lines do double duty, as, for instance, the vertical at 45 degrees to the right of the observer has the same great circle as the horizontal that goes under the observer at a 45 degree angle to his right. The same grid, with some further refinements, was used to draw the illustration on fig. \ref{room45pretty}. 

Often we will want our grids to be oriented at some arbitrary angle to the central axis.
In fig. \ref{arbitraryback} we represent a square $ABCE$ on a horizontal plane, below, behind, and to the left of the observer, such that one side of the square makes a 60 degree angle with $\overrightarrow{O\ppf}$. Once again the perspective plane also represents the top and back views of the scene, in the same setup as above. On the top view we draw the square $ABCE$ and project its sides until they intersect the top view of the observer's plane. We draw lines from $\ppf$ to these intersection points and find their projections on the equator. We find the vanishing points, all on the horizontal measuring line, one set of lines converging to the points at $60^\circ$ and $-120^\circ$ and the other to $-30^\circ$ and $150^\circ$. Through these points we find the arcs of circle corresponding to the lines that extend the sides of the square . From the arcs on the anterior perspective we obtain the corresponding fat lines of the posterior perspective. By intersecting these lines we find the perspective images of the points $A, B, C, E$. Finally, from this square we can plot a grid by an adaptation of the method already described.

\begin{figure}
\centering
\includegraphics[height=12cm]{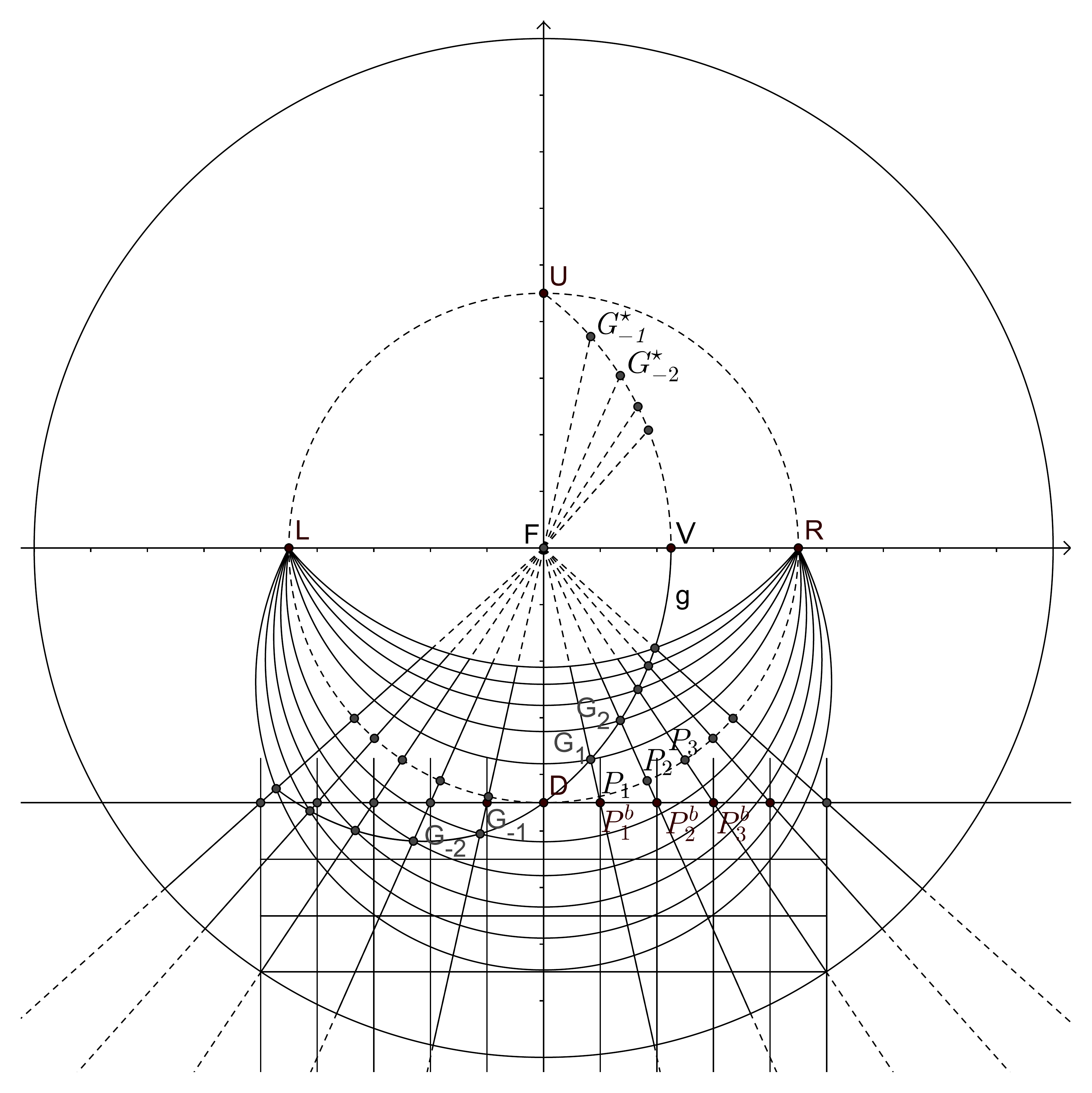}
\caption{Construction of a uniform central perspective grid. Lines converge to four vanishing points: $\ppl$, $\ppr$, $\ppf$, and $\ppb$.}\label{onepoint360}
\end{figure}

\begin{figure}
\centering
\includegraphics[height=8cm]{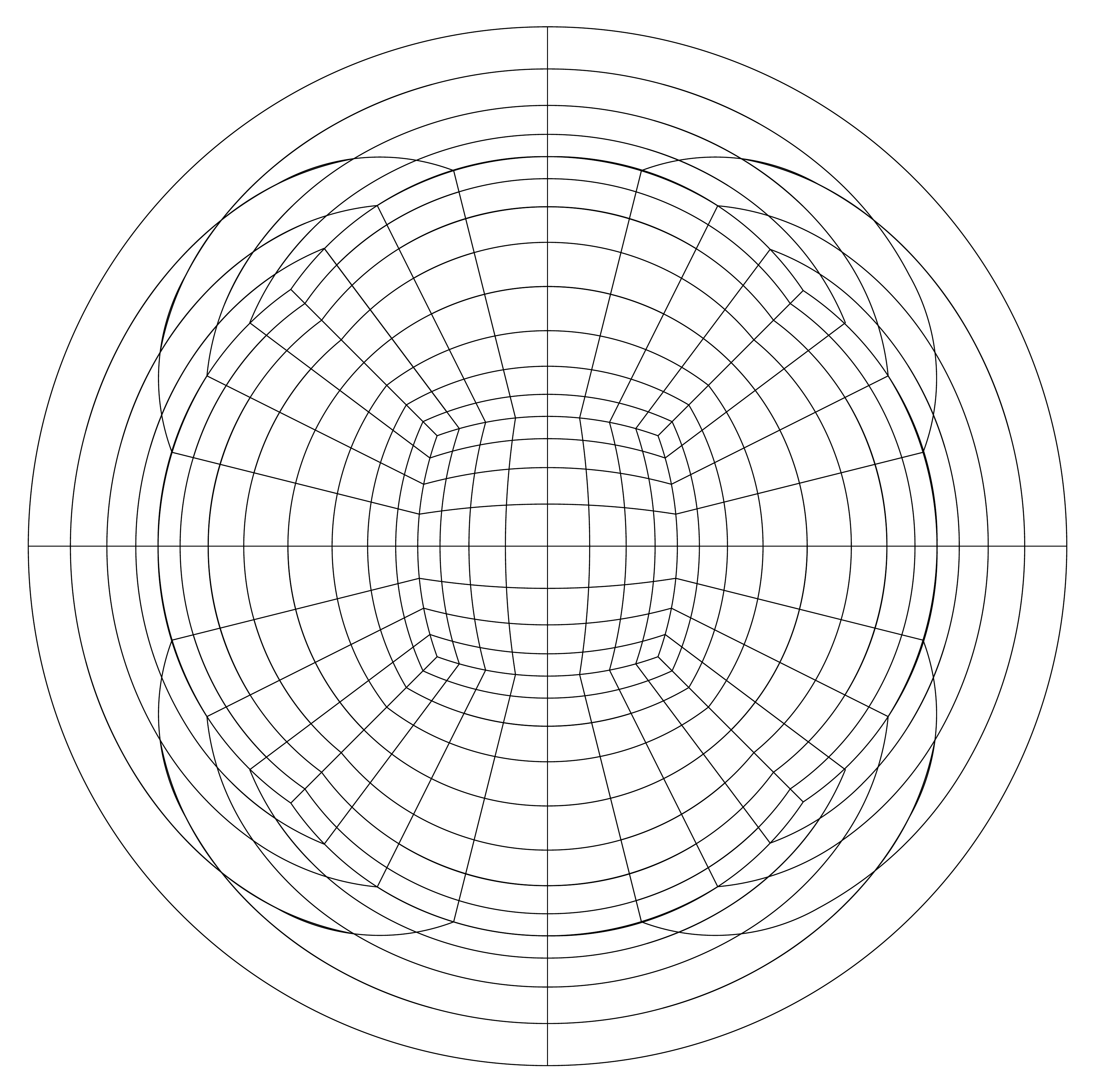}
\caption{A cubical box seen from its center. This is a six-point perspective construction (lines going to $\ppl$, $\ppr$, $\ppf$, $\ppb$, $\ppu$, $\ppd$). }\label{room45grid}
\end{figure}

\begin{figure}
\centering
\includegraphics[height=12cm]{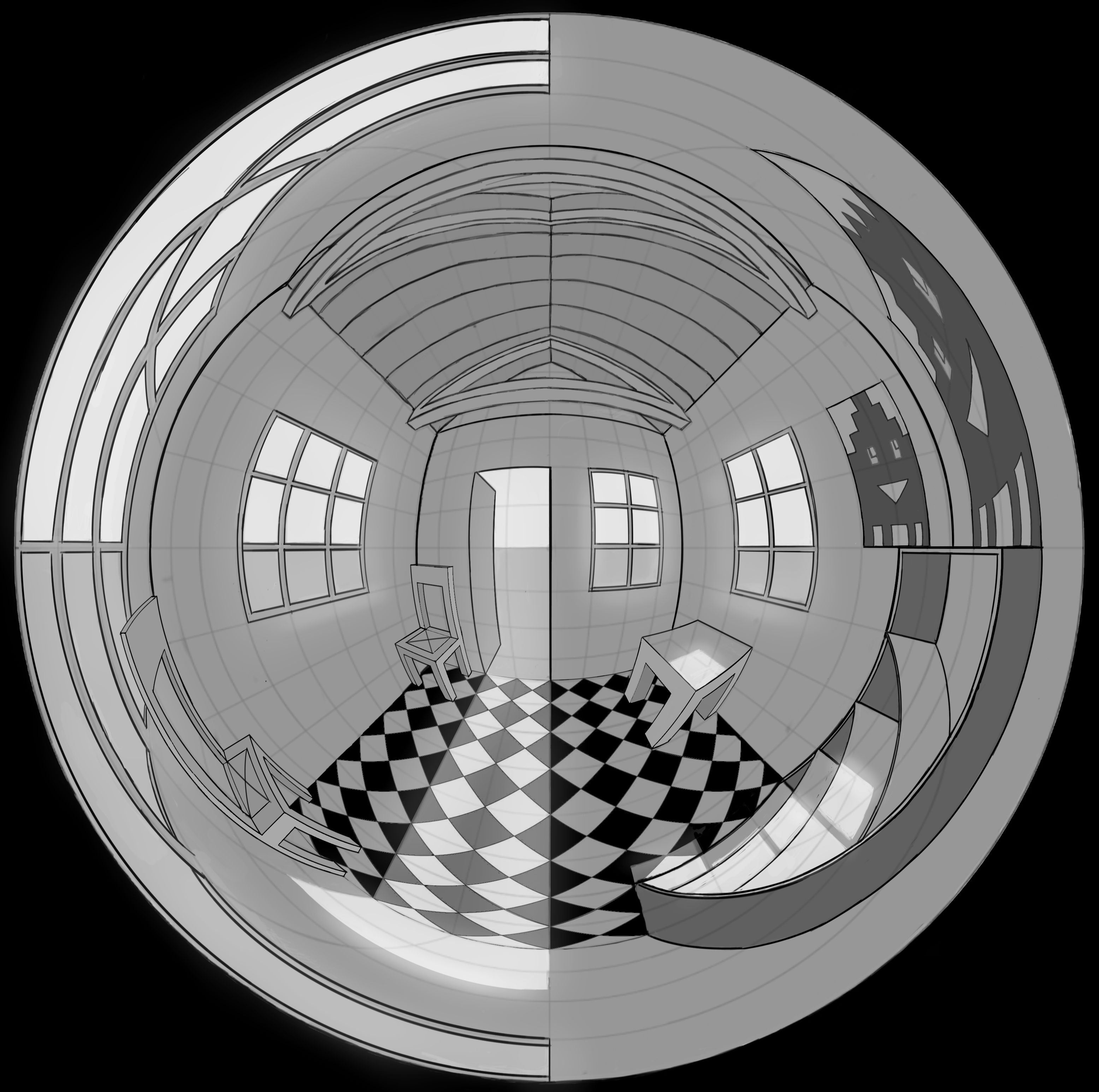}
\caption{Room 45. Drawing by the author of a cubical room using the construction of fig. \ref{room45grid}. The windows on the back and left walls have identical linear measurements, as do the pac-man figures on the right and back walls and the chairs on the front and back walls. This makes apparent the extent and nature of the deformations near the blowup. }\label{room45pretty}
\end{figure}

\begin{figure}
\centering
\includegraphics[height=11cm]{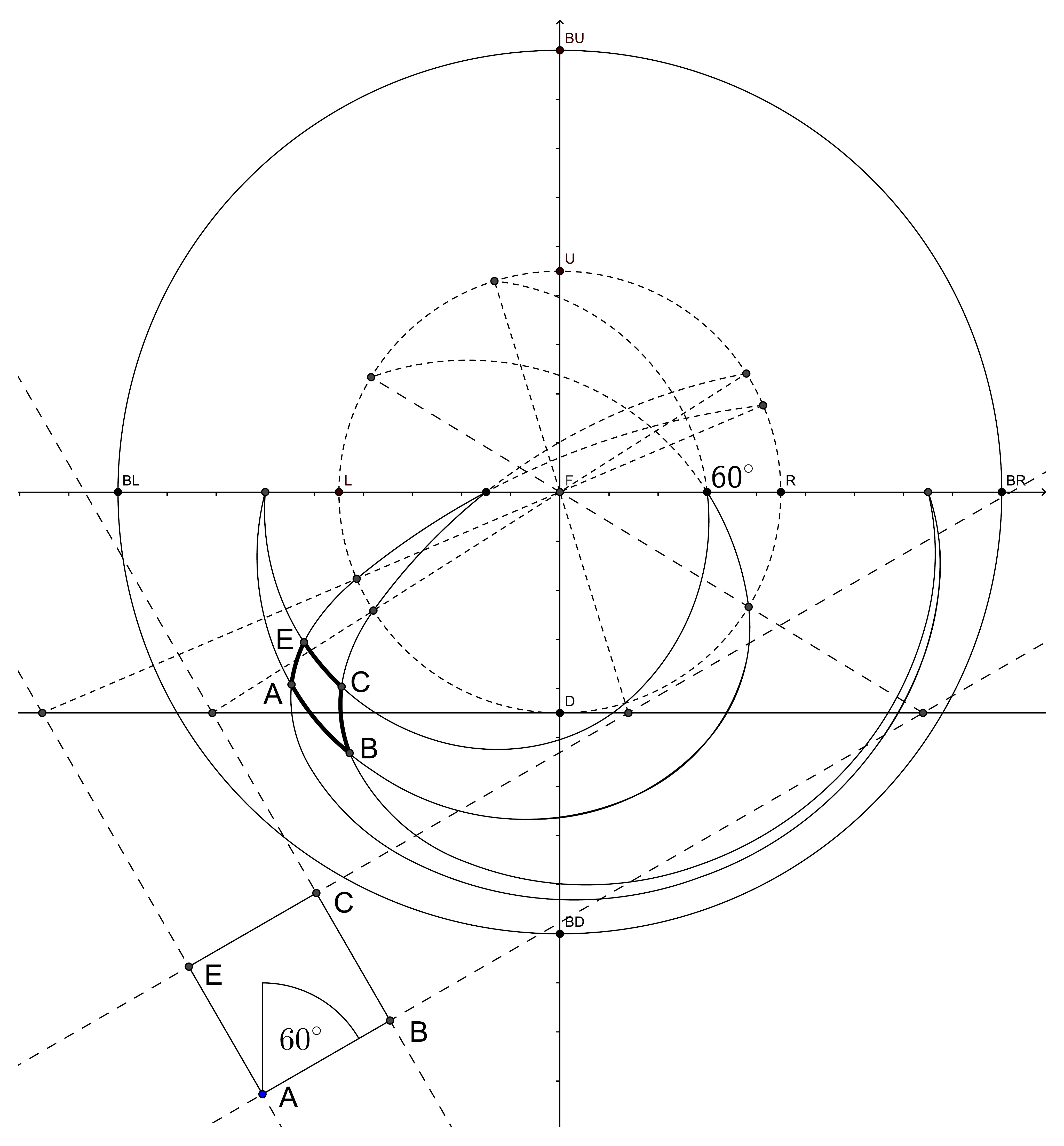}
\caption{A square, below, behind, and to the left of the observer.}\label{arbitraryback}
\end{figure}

\section{Comparison with reflections on a sphere}

It is apparent from the plot of the cubical room in fig \ref{room45pretty} that our perspective bears striking resemblances to a reflection on a sphere\footnote{An enterprising artist, more concerned with speed of execution than exactness might use it for an easy substitute of a true sphere reflection; the casual viewer might very well not notice the difference.}\cite{reflective}. It is natural to ask if there is a relation between the two. We shall now compare their properties.

Recall how reflection works (fig \ref{obstruction}): an observer at $E$ will see a  point $P$ reflected at a point $R$ on the sphere according to these rules: $R$ will be on the intersection of the sphere with the plane $EOP$ and $\angle (\overrightarrow{RP},\overrightarrow{OR})=\angle (\overrightarrow{RE},\overrightarrow{OR})$ (angle of incidence equals angle of reflection).

General reflections are hard to calculate. Given $R$, it is easy to find the incident and reflected rays, but the inverse problem of obtaining $R$ from $P$ is non-trivial. In general it requires solving an algebraic equation of order four (see \cite{REFLECTIONS}).

Also, occlusions are non-trivial. In fig. \ref{obstruction} we can see that points $P$ and $Q$ will have the same reflection $R$ even though they are not in the same ray from either the center $O$ or the observer $E$. This implies that a general reflection is not a central perspective. Recall that in central perspectives occlusions are always radial since they are determined at the anamorphosis step, whatever the flattening may be.

Finally, total spherical perspective has an angle of view of $360^\circ$, while 
the angle of view captured by a reflection depends on the distance of the observer to the sphere. The points of the sphere define a cone with the observer $E$ at the vertex, the cone of shadow, and every point outside of this cone of shadow will be viewable on the sphere.  The field of view will be $360^\circ-\delta$ with $\delta=2\sin^{-1}(r/d)$, where $r$ is the radius of the sphere and $d$ the distance of the observer from the center of the sphere. 

There is however a limiting case where spherical perspective and reflection on a sphere become quite similar.
Imagine either moving away from the sphere (preserving its apparent size by looking at it through a telescope) or shrinking it (and seeing at it through a microscope). Then $r$ becomes small compared to $d$ and, in the limit $r/d \to 0$, we get a $360^\circ$ angle of view. $ER$ becomes parallel to $EO$, the angle of reflexion $\alpha$ becomes equal to $\beta=\angle EOR$, and $\angle ERP\to 2 \alpha$ (see fig. \ref{parallelreflection}).
If furthermore $r \to 0$ (an infinitesimal sphere) or $r/|OP|\to 0$ while $\lambda=\angle EOP$ remains constant (reflection  of points on the celestial sphere) then $\lambda\to 2 \alpha$. 
In this limit, the projection becomes radial (therefore making occlusions trivial), and the whole space of directions is mapped onto the hemisphere visible from $E$. This can be seen as a sphere anamorphosis followed by a uniform contraction onto a hemisphere by halving the angle $\angle EOR$ of each point $R$ of the sphere.

Seen from point $E$, since all rays $ER$ are parallel to the axis $OE$, the reflection will look like the  orthogonal projection along $OE$ of the image on the sphere. Hence the reflection, seen from $E$, is anamorphically equivalent to a central perspective (central with respect to $O$, not $E$) obtained by anamorphosis onto the sphere followed by a flattening which is the composition of a uniform compression onto a hemisphere followed by an orthogonal projection. In the spherical coordinates of equation \ref{natcoords} (with the $y$ axis on $\overrightarrow{OE}$ and $x,z$ in the perpendicular plane through $O$) and rescaling the sphere to $r=1$, this perspective is the map 
\begin{equation*}
(\rho,\lambda,\theta) \mapsto (1,\lambda,\theta) \mapsto (1,\lambda/2,\theta)\mapsto \sin (\lambda/2) (cos(\theta),sin(\theta))
\end{equation*}
where the first map is the anamorphosis, the second is the crunching into the anterior hemisphere and the last step is the orthogonal projection onto the disc perpendicular to $EO$ at $O$.

This is a 360 degree perspective, but different from our spherical perspective. It is not linear along $\lambda$, squashing the outer angles more, and cannot be easily used for drawing by hand  without the help of pre-computed grids (since we lose the isometry along measuring lines). But we can see why there is a qualitative similarity between the two.

\begin{figure}[H]
\centering
\parbox{5.5cm}{
\includegraphics[width=5.5cm]{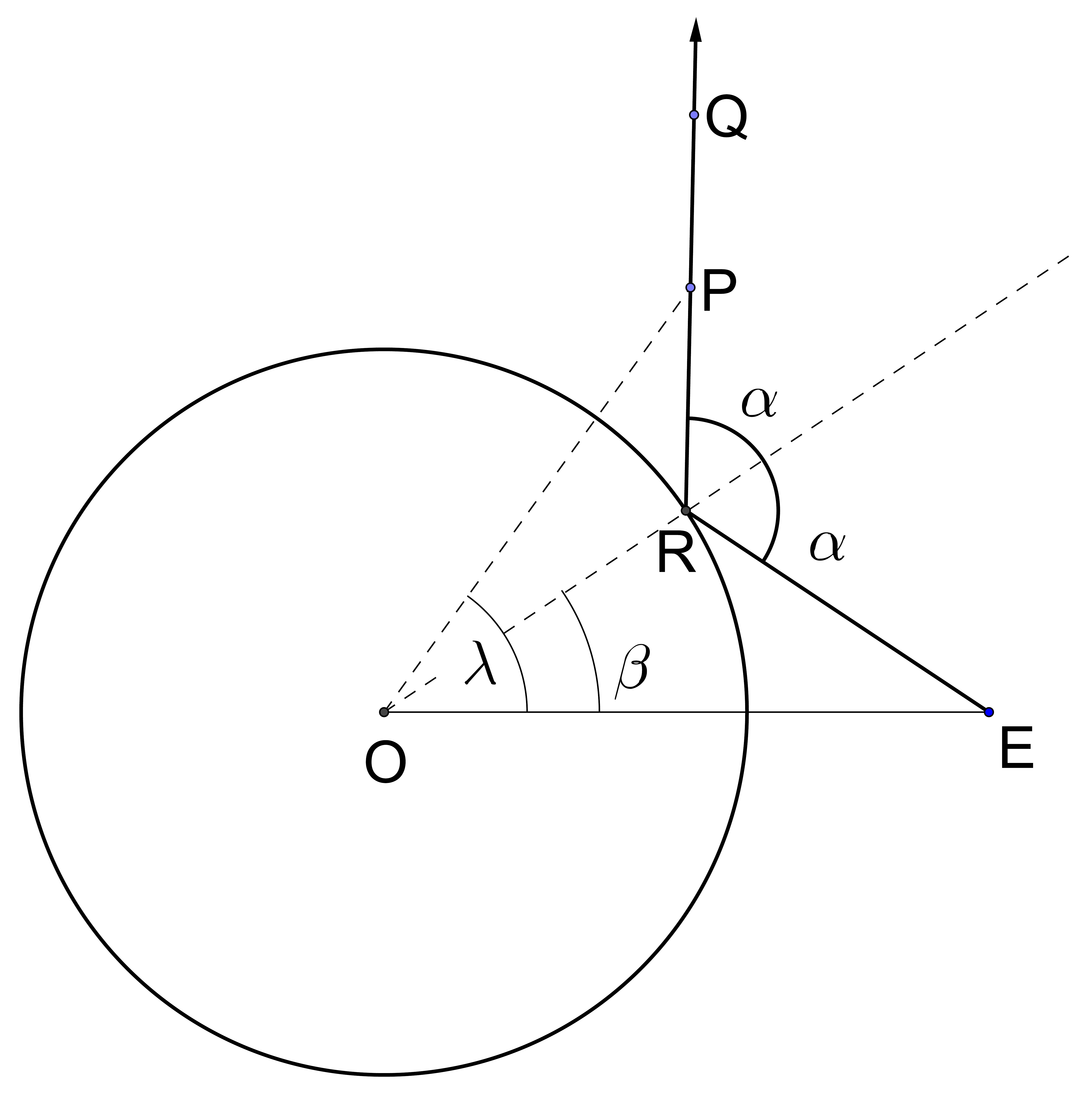}
\caption{Non-radial occlusion. 
\\
Points P and Q both project to R although they are not in the same ray from E or O.}
\label{obstruction}}
\qquad
\begin{minipage}{6cm}
\includegraphics[width=7cm]{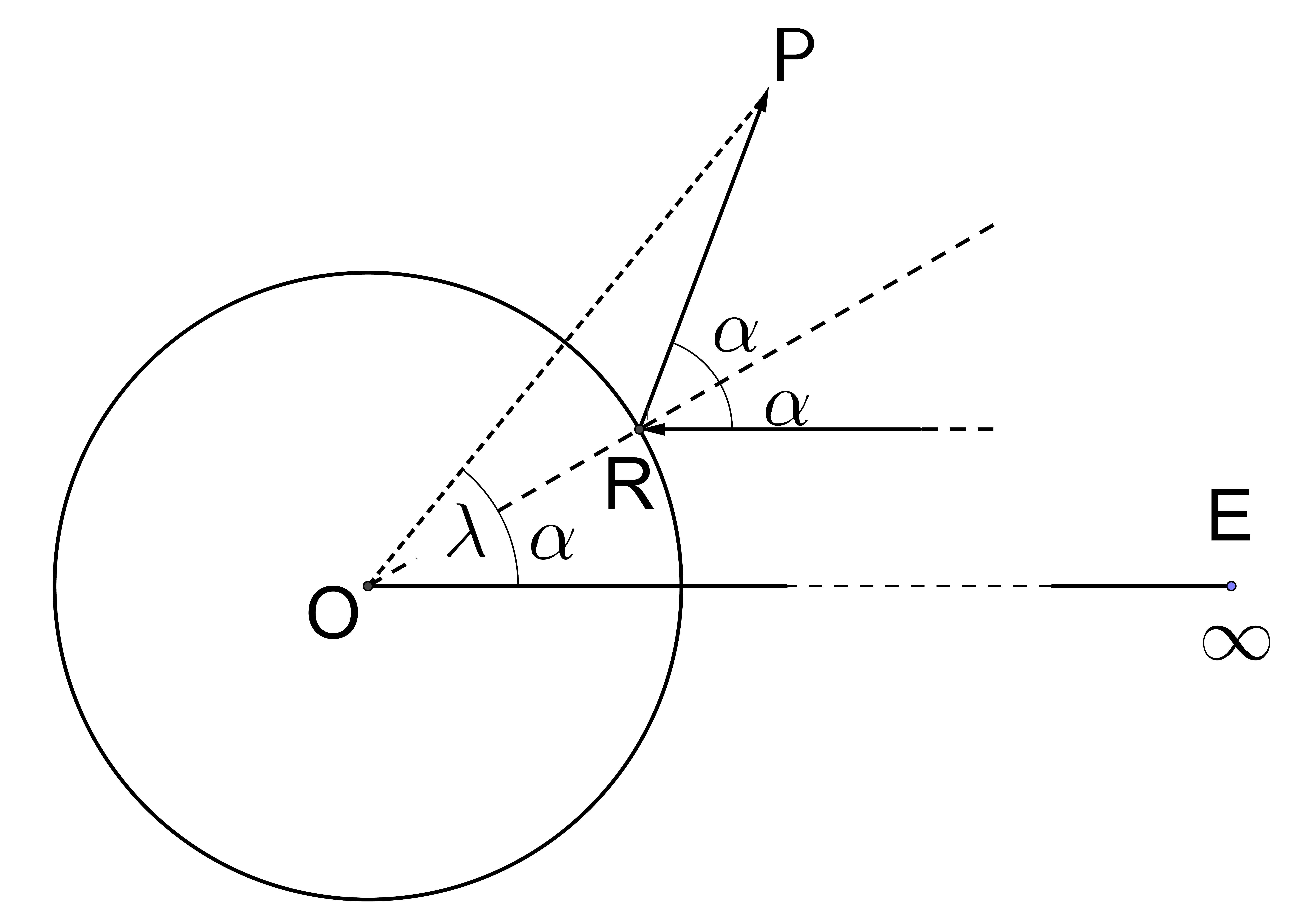}
\caption{With E at infinity all rays become parallel and angle $\beta$ becomes equal to $\alpha$. If P goes to infinity (with fixed $\lambda$) then $\alpha$ goes to $\lambda/2$.}
\label{parallelreflection}
\end{minipage}
\end{figure}

It has been noted in \cite{REFLECTIONS} that reflections on a sphere could be used as a form of wide angle perspective. This is well inspired in art history, as reflections drawn from observation have been the time honoured tool of the artist to represent a wide angle of view, Escher's self portrait being a well known example. 

But we have seen the difficulties in this approach. First, reflections are hard to calculate. Second, they are not central perspectives, and they have non-trivial occlusions. As noted in  \cite{REFLECTIONS} this causes difficulties for hidden-face removal algorithms. Even in the limit presented above, where it becomes a central perspective, it is clear that a sphere reflection only makes for a practical perspective for the draughtsman when drawn from observation of an actual sphere.

Spherical perspective is a much more natural proposal for a wide perspective. It allows for up to a 360 degree view, it is easily computed by equation \ref{exactxyz}, it is a central perspective and therefore has trivial occlusions, so   hidden-face algorithms will work exactly as in the classical case, being calculated at the anamorphosis step. Most important for our purposes, it lends itself to be used by an artist armed only with his ruler, compass, and eventual nail. With some practice even these instruments can be abandoned in favor of reasonably intuitive and accurate freehand drawing from either nature or the imagination.

%
%
Note: Further notes, computer code and illustrations will be made available at the author's web page: 
\url{http://www.univ-ab.pt/~aaraujo/full360.html}
\section{Acknowledgements}
This work was supported by Funda\c{c}\~ao para a Ci\^encia e a Tecnologia (FCT) project UID/MAT/04561/2013.

\,
\,



\end{document}